\DeclareMathOperator{\Null}{Null}
\newtheorem{theorem}{Theorem}[section]
\newtheorem{lemma}[theorem]{Lemma}
\newtheorem{corollary}[theorem]{Corollary}
\theoremstyle{remark}
\newtheorem{remark}[theorem]{Remark}
\newcommand{\bi}{\begin{itemize}}
\newcommand{\ei}{\end{itemize}}
\newcommand{\ben}{\begin{enumerate}}
\newcommand{\een}{\end{enumerate}}
\newcommand{\be}{\begin{equation}}
\newcommand{\ee}{\end{equation}}
\newcommand{\bea}{\begin{eqnarray}} 
\newcommand{\eea}{\end{eqnarray}}
\newcommand{\ba}{\begin{align}} 
\newcommand{\ea}{\end{align}}
\newcommand{\bse}{\begin{subequations}} 
\newcommand{\ese}{\end{subequations}}
\newcommand{\bc}{\begin{center}}
\newcommand{\ec}{\end{center}}
\newcommand{\bfi}{\begin{figure}}
\newcommand{\efi}{\end{figure}}
\newcommand{\half}{\mbox{\small $\frac{1}{2}$}}
\DeclareMathOperator{\re}{Re}
\DeclareMathOperator{\im}{Im}
\newcommand{\mpspack}{{\tt MPSpack}}       
\newcommand{\eps}{\varepsilon}
\title[Drum problem via the Fredholm determinant]{
Robust and efficient solution of the drum problem
via Nystr\"om approximation of the Fredholm determinant}
\author{Lin Zhao and Alex Barnett}
\address{Department of Mathematics, Dartmouth College, Hanover, NH, 03755}
\date{\today}
\begin{document}
\begin{abstract}

The ``drum problem''---finding the eigenvalues and eigenfunctions of
the Laplacian with Dirichlet boundary condition---has many applications,
yet remains challenging for general domains
when high accuracy or high frequency is needed.
Boundary integral equations 
are appealing for large-scale problems,
yet certain difficulties have limited their use.
We introduce two ideas to remedy this:
1) We solve the resulting nonlinear eigenvalue problem
using Boyd's method for analytic root-finding applied to the
Fredholm determinant.
We show that this is
many times faster than the usual iterative
minimization of a singular value.
2) We fix the problem of spurious {\em exterior resonances}
via a combined-field representation.
This also provides the first robust boundary integral eigenvalue method
for non-simply-connected domains.
We implement the new method in two dimensions using
spectrally accurate Nystr\"om product quadrature.
We prove exponential convergence of the determinant at roots
for domains with analytic boundary.
We demonstrate 13-digit accuracy, and improved
efficiency, in a variety of domain shapes including
ones with strong exterior resonances.

\end{abstract}
\maketitle

\section{Introduction}

Eigenvalue problems (EVPs) for linear PDEs have a wealth of applications
\cite{babosrev}
to modeling vibration problems, acoustic, electromagnetic and quantum cavity resonances,
as well as in modern areas such as nano-scale devices \cite{qdots},
micro-optical resonators for high-power lasers \cite{hakan05},
accelerator design \cite{akcelik}, and
data analysis \cite{saito}.
The paradigm is the Dirichlet eigenvalue problem:
given a bounded connected
domain $\Omega \subset \mathbb{R}^2$ with boundary $\Gamma$,
to find eigenvalues $\kappa^2$ and corresponding nontrivial eigenfunctions $u$
that satisfy
\bea
(\Delta + \kappa^2)u &=& 0 \qquad \mbox{ in }  \Omega,
\label{helm}
\\
u &=& 0 \qquad \mbox{ on } \Gamma,
\label{bc}
\eea
where $\Delta := \partial^2/\partial x_1^2 + \partial^2/\partial x_2^2$
is the Laplacian.
We refer to $\kappa$ as the eigenfrequency,
and label the allowable set $\kappa_1<\kappa_2\le \kappa_3 \le \cdots
\nearrow +\infty$,
counting multiplicities.
$u_j$ will refer to an eigenfunction for the eigenfrequency $\kappa_j$.
A numerical solution is necessary in all but a few special shapes
(in 2D, ellipses and rectangles) where the Laplacian is separable
\cite{CoHi53}.
This and related EVPs are also of interest in mathematical areas such as
quantum chaos \cite{nonnenrev}.
This is covered in
excellent reviews by Kuttler--Sigillito \cite{KS} and Grebenkov--Nguyen
\cite{grebenkov}.

Numerical solution of \eqref{helm}--\eqref{bc}
falls broadly into two categories:
A) direct discretization, using finite differencing or finite elements
to give a sparse linear EVP where $\kappa^2$ is the eigenvalue;
vs
B) reformulation as a boundary integral equation (BIE)
\cite{backerbim},
discretized using the Galerkin or Nystr\"om methods,
resulting in a highly {\em nonlinear} EVP, again for the eigenvalue $\kappa^2$
or eigenfrequency $\kappa$.
The nonlinearity with respect to $\kappa$
comes from that of the fundamental solution to the Helmholtz
equation \eqref{helm}.
The advantages of type B include:
a huge reduction in the number of unknowns
(due to the decrease in dimensionality by one) especially at high frequency,
and
increased accuracy (since finite element high-frequency
``pollution'' \cite{pollution}
is absent). High-order or spectral accuracy is not hard to achieve,
at least in two dimensions (2D).
Yet, as pointed out by B\"acker \cite[Sec.~3.3.6]{backerbim}, the standard
BIE method
is not even robust for a simply-connected domain,
due to the possibility of spurious exterior resonances.
Recently, type-B methods which approximately {\em linearize}
the nonlinearity, hence boost efficiency at high frequency,
have been created,
but these are limited to moderate $\kappa$ \cite{kirkup},
to heuristic methods with low accuracy \cite{hakansca,veblemonza},
or to domains that are star-shaped \cite{v+s,que,mush,sca}.
This motivates the need for a robust type-B method that
applies to all domain shapes, including multiply-connected ones,
and remains efficient up to at least medium-high frequencies.

In this work we solve two of these issues:
(1)
The standard approach to solve the nonlinear EVP is
by searching for ``V-shaped'' minima of a smallest singular value
\cite{tref06};
we boost efficiency by
turning this into a search for the roots of an {\em analytic} function,
which can be done with less function evaluations and without the
expensive computation of the SVD.
(2) We solve the exterior resonance problem, and at the same
time the case of multiply-connected domains,
using a combined field integral equation \eqref{CFIE}.
We also provide several analysis results that place our method
on a rigorous footing.

The outline of this paper is as follows. In Section~\ref{s:bie}
we review the use of potential theory to reformulate 
the eigenvalue problem as a BIE,
and give a discretization of the BIE due to Kress
\cite{kress91} that achieves spectral accuracy for smooth domains.
To tackle issue (1) above,
in Section \ref{s:fred} we introduce the {\em Fredholm determinant},
\be
f(\kappa) = \det ( I - 2 D(\kappa))
\label{fk}
\ee
where $D$ is the double-layer operator (defined by \eqref{dlp} below),
whose roots are precisely the eigenfrequencies $\kappa_j$ for simply connected domains. 
Following Bornemann \cite{bornemann},
we approximate this with the determinant of a Nystr\"om matrix.
Our main Theorem~\ref{t:main},
in Section~\ref{s:thm}, states that
this approximation convergences exponentially to zero at the true
eigenfrequencies, if the domain has analytic boundary.
Since $f(\kappa)$ is analytic for $\kappa$ nonzero, we
propose in Section~\ref{s:boyd} applying Boyd's method to find its roots,
an application we have not seen in the literature before.
In Section ~\ref{s:res} we prove, and demonstrate numerically,
that the CFIE \eqref{CFIE}
is robust for domains with exterior resonances or interior holes.
It is well known that finding roots becomes ill-conditioned when they are close,
hence we explain in Section~\ref{s:svd}
how we retain robustness in the case of nearby eigenvalues
by reverting to the (more expensive) SVD method in these (rare) case.
Section \ref{s:c} gives numerical performance tests of the entire scheme,
achieving 13 digits the first 100 eigenfrequencies of
a general domain, and a domain with exterior resonances, showing that
our method is competitive in terms of both accuracy and timing.

\section{Boundary integral formulation and quadrature scheme}
\label{s:bie}
Now we lay the foundation of our method for computing eigenfrequencies
by describing the boundary integral formulation and its analyticity
properties for analytic domains,
and then its numerical treatment in 2D, which is standard \cite{kress91}.
\subsection{Integral equation formulation}

For a bounded domain $\Omega$ with twice continuously differentiable boundary $\Gamma$, we explicitly construct solutions to the Helmholtz equation by layer potentials using the fundamental solution as the kernel. The fundamental solution is given by
\begin{equation}
\Phi(x,y):= \frac{i}{4}H^{(1)}_{0}(\kappa \vert x-y \vert), \hspace{0.5cm} x \neq y,
\qquad x,y\in\mathbb{R}^2,
\end{equation}
where $H^{(1)}_{0}$ is the first-kind Hankel function of order zero.

For a continuous function $\varphi$ on $\Gamma$, the single layer operator $\mathcal{S} : C(\Gamma) \rightarrow C(\mathbb{R}^2 \backslash \Gamma)$ is defined as follows, with $v$ denoting the resulting single layer potential
\begin{equation}
v(x) = \mathcal{S} \varphi (x):= \int_{\Gamma} \Phi(x,y) \varphi(y)ds(y), \hspace{0.5cm} x\in \mathbb{R}^{2} \backslash \Gamma,
\end{equation}
where $ds(y)$ is the arc-length element on $\Gamma$.
Note that the domain of $v$ excludes $\Gamma$,
The corresponding boundary operator $S : C(\Gamma) \rightarrow C(\Gamma) $ is
\begin{equation}
S \varphi (x):= \int_{\Gamma} \Phi(x,y) \varphi(y)ds(y), \hspace{0.5cm} x\in \Gamma.
\end{equation}
The double layer operator $\mathcal{D}: C(\Gamma) \rightarrow C(\mathbb{R}^2 \backslash \Gamma)$ with associated double layer potential $u$ is given by
\begin{equation}
u(x) = \mathcal{D} \varphi  (x):= \int_{\Gamma} \frac{\partial\Phi(x,y)}{\partial n(y)} \varphi(y)ds(y), \hspace{0.5cm} x \in \mathbb{R}^{2} \backslash \Gamma.
\label{DLP}
\end{equation}
where $n(y)$ is the unit normal vector at $y\in\Gamma$ directed to the exterior of the domain. Again, because the integral exists for $x \in \Gamma$, one may define
a boundary operator $D: C(\Gamma) \rightarrow C(\Gamma)$ by
\begin{equation}
D \varphi  (x):= \int_{\Gamma} \frac{\partial\Phi(x,y)}{\partial n(y)} \varphi(y)ds(y), \hspace{0.5cm} x \in \Gamma.
\label{dlp}
\end{equation}
The above operators depends on the frequency $\kappa$, and we will
indicate this only when needed.

Both $u$ and $v$ as defined are solutions to the Helmholtz equation and can be continuously extended, either from the interior or the exterior of $\Omega$,  to the boundary by taking limits in the following sense:
\be
u^{\pm}(x):=\lim_{h\rightarrow 0+} u(x \pm hn(x)),
\qquad
u_n^\pm(x):=\lim_{h\rightarrow 0+} n(x) \cdot \nabla u(x \pm hn(x)),
\qquad x \in \Gamma,
\ee
and analogously for $v$.
These limits relate to the boundary operators via the jump relations
\cite{coltonkress}
\bea
v^{\pm}(x) = S \varphi (x), \hspace{0.5cm} x \in \Gamma,
\label{JR1}
\\
v_n^{\pm}(x) = (D^T \mp \half) \varphi (x), \hspace{0.5cm} x \in \Gamma,
\label{JR2}
\\
u^{\pm}(x)= (D \pm \half) \varphi(x), \hspace{0.5cm} x \in \Gamma.
\label{JR3}
\\
u_n^{\pm}(x) = T \varphi (x), \hspace{0.5cm} x \in \Gamma,
\label{JR4}
\eea
where $D^{T}$ is given by
\begin{equation}
D^T \varphi  (x):= \int_{\Gamma} \frac{\partial\Phi(x,y)}{\partial n(x)} \varphi(y)ds(y), \hspace{0.5cm} x \in \Gamma.
\end{equation}
and the hypersingular operator $T$ is defined by
\begin{equation}
T \varphi  (x):= \frac{\partial}{\partial n(x)}\int_{\Gamma} \frac{\partial\Phi(x,y)}{\partial n(y)} \varphi(y)ds(y), \hspace{0.5cm} x \in \Gamma,
\end{equation}

When $u$ is given by a double-layer potential with density $\varphi$,
enforcing the Dirichlet boundary condition \eqref{bc} gives
\begin{equation}
\label{eq: BIEwD}
(I -2D(\kappa)) \varphi = 0 .
\end{equation}
Thus we might hope that the (nonlinear) eigenvalue problem that
$I-2D(\kappa)$ has a nontrivial nullspace is equivalent to
the (linear) eigenvalue problem \eqref{helm}-\eqref{bc}.
For a domain of general connectivity there is {\em not} such an equivalence;
we merely have the following.
\begin{lemma}
Let $\Omega$ be a (possibly non-simply connected)
bounded domain with twice continuously differentiable boundary $\Gamma$.
Then if $\kappa^2$ is a Dirichlet eigenvalue of $\Omega$,
$I-2D(\kappa)$ has a nontrivial nullspace.
\label{l:nullspace}
\end{lemma}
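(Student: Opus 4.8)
The plan is to manufacture a nonzero element of the nullspace of $I-2D(\kappa)$ starting from the eigenfunction itself, via potential theory. Let $u$ be a nontrivial solution of \eqref{helm}--\eqref{bc} at the eigenfrequency $\kappa$; since $\Gamma$ is $C^2$, elliptic regularity makes $u$ of class $C^1$ up to $\Gamma$, so its interior normal derivative $\psi:=u_n^-$ is a well-defined continuous function on $\Gamma$. First I would apply Green's representation formula for the Helmholtz equation, which gives for $x\in\Omega$
\[
u(x)=\int_\Gamma\Big(\Phi(x,y)\,u_n^-(y)-\frac{\partial\Phi(x,y)}{\partial n(y)}\,u(y)\Big)\,ds(y).
\]
Because $u=0$ on $\Gamma$ the double-layer term drops out, leaving $u=\mathcal{S}\psi$ in $\Omega$. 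Thus the eigenfunction is represented by a single-layer potential, whose density $\psi$ is the object I will place in a nullspace.

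Next I would compare the interior limits of the single-layer potential $v:=\mathcal{S}\psi$, defined on all of $\mathbb{R}^2\setminus\Gamma$, with those of $u$. Since $v=u$ throughout $\Omega$, their interior trace and interior normal derivative coincide: $v^-=u|_\Gamma=0$ and $v_n^-=\psi$. Feeding this into the single-layer jump relation \eqref{JR2}, namely $v_n^-=(D^T+\half)\psi$, and equating with $\psi$ yields $(D^T-\half)\psi=0$, i.e.\ $(I-2D^T)\psi=0$. It remains to rule out $\psi=0$: if $\psi=u_n^-$ vanished, then $u$ would have vanishing Cauchy data on $\Gamma$, and the representation formula above would force $u\equiv0$ in $\Omega$, contradicting nontriviality. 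Hence $\Null(I-2D^T(\kappa))\neq\{0\}$.

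The final step, which I expect to be the crux, is to transfer this conclusion from the transpose $D^T$ to $D$ itself. For a $C^2$ boundary the double-layer kernel is continuous, so $D(\kappa)$ is compact on $C(\Gamma)$ and $I-2D(\kappa)$ is Fredholm of index zero. A short computation using the symmetry $\Phi(x,y)=\Phi(y,x)$ shows that $D^T$ is precisely the transpose of $D$ with respect to the bilinear pairing $\langle f,g\rangle=\int_\Gamma fg\,ds$, so the Riesz--Schauder theory gives $\dim\Null(I-2D)=\dim\Null(I-2D^T)$. Since the latter is positive, $I-2D(\kappa)$ has a nontrivial nullspace. The hard part is really conceptual rather than computational: the interior eigenfunction is naturally a \emph{single}-layer potential, which only exhibits a nullvector of $I-2D^T$, and it is the Fredholm alternative---valid uniformly whether or not $\Omega$ is simply connected, since $\Gamma$ and the outward normal are handled componentwise---that converts this into the asserted statement about $I-2D$.
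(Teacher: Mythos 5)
Your proposal is correct and follows essentially the same route as the paper's own proof: Green's representation formula combined with the Dirichlet condition exhibits the eigenfunction as a single-layer potential, the jump relation \eqref{JR2} then shows $u_n^-$ is a nullvector of $I-2D^T(\kappa)$, and compactness of $D$ plus the Fredholm alternative transfers the nontrivial nullspace to $I-2D(\kappa)$. The only difference is that you explicitly justify the nontriviality of $u_n^-$ (vanishing Cauchy data would force $u\equiv 0$ via the representation formula), a step the paper asserts without proof.
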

\begin{proof}
Green's representation theorem \cite[Theorem 2.1]{coltonkress} states
that if $(\Delta +\kappa^2)u=0$ in $\Omega$, then
\be
{\mathcal S}u_n^- - {\mathcal D} u^- \; = \;
\left\{\begin{array}{ll} u, & \qquad \mbox{ in } \Omega ~, \\
0, & \qquad
\mbox{ in } \mathbb{R}^2\backslash\overline{\Omega}~.\end{array}\right.
\label{GRF}
\ee
Applying this to $u$ an eigenfunction at frequency $\kappa_j$, and
taking its derivative on $\Gamma$ using \eqref{JR2} gives
$u_n^- = (D^T + \half)u_n^-$. Since $u_n^-$ is nontrivial,
the compactness of $D$ and the Fredholm alternative proves 
$I -2D(\kappa_j)$ has a nontrivial nullspace.
\end{proof}
The consequence for a multiply-connected domain is that it is
possible that there are {\em spurious} frequencies where $I-2D(\kappa)$ has
a nontrivial nullspace but $\kappa^2$ is not a Dirichlet eigenvalue
(we will characterize these frequencies in Lemma~\ref{l:hole}).

Only for the case of $\Omega$ simply-connected does
equivalence hold, as the following well-known theorem states.
\begin{theorem}\cite{CK83}     
\label{kress}
Let $\Omega$ be a bounded, simply-connected
domain with twice continuously differentiable boundary $\Gamma$.
Then for each $\kappa \in \mathbb{C}\backslash\{0\}$ with $\operatorname{Im}\kappa \geq 0$, $\kappa^2$ is a Dirichlet eigenvalue of $\Omega$
if and only if $I-2D(\kappa)$ has a nontrivial nullspace.
Moreover, the dimension of the eigenspace is the same as that of the nullspace.
\end{theorem}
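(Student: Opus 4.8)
The plan is to handle the two implications and the multiplicity claim separately, since the forward direction is already available. Lemma~\ref{l:nullspace} shows without any connectivity hypothesis that a Dirichlet eigenfrequency $\kappa$ forces $I-2D(\kappa)$ to have a nontrivial nullspace, so I would simply cite it for the ``only if'' direction. The real content is the converse together with the dimension count, and this is exactly where the hypotheses $\operatorname{Im}\kappa\ge 0$ and simple-connectedness must be used.

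For the ``if'' direction, suppose $\varphi\ne 0$ satisfies $(I-2D(\kappa))\varphi=0$, equivalently $(D-\half)\varphi=0$. I would form the double-layer potential $u=\mathcal D\varphi$, which solves the Helmholtz equation off $\Gamma$ and is radiating in the exterior because $\Phi$ is built from $H^{(1)}_0$. The jump relation \eqref{JR3} gives $u^-=(D-\half)\varphi=0$, so $u$ is a solution of the interior problem with vanishing Dirichlet data. If $u\not\equiv 0$ in $\Omega$ we are done, since then $u$ is an eigenfunction and $\kappa^2$ is a Dirichlet eigenvalue. The task is to exclude the case $u\equiv 0$ in $\Omega$: there $u^-=0$ and $u_n^-=0$, and since the double-layer potential has continuous normal derivative across $\Gamma$ (both one-sided limits equal $T\varphi$ by \eqref{JR4}), also $u_n^+=0$, so $u$ restricted to the exterior is a radiating solution of the homogeneous exterior Neumann problem.

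The hard part is this exterior uniqueness step, and it is the only place genuine analytic input is needed. When $\Omega$ is simply connected its complement is a single connected unbounded region, and for $\operatorname{Im}\kappa\ge 0$ a radiating solution with vanishing Neumann data vanishes identically there (exponential decay when $\operatorname{Im}\kappa>0$; Rellich's lemma and the radiation condition in the real case); I would cite the standard exterior-uniqueness results in \cite{coltonkress}. Hence $u\equiv 0$ in the exterior as well, so the jump $\varphi=u^+-u^-=0$ contradicts $\varphi\ne 0$, forcing $u\not\equiv 0$ and completing the converse. Simple-connectedness is essential precisely here: if $\Omega$ had holes, the complement would also contain bounded components in which $u$ need not vanish, which is the origin of the spurious frequencies alluded to after the lemma.

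For the multiplicity statement I would compare the nullspace $N=\Null(I-2D(\kappa))$ with the eigenspace $E$ by exhibiting injections in both directions. The map $\varphi\mapsto \mathcal D\varphi|_\Omega$ carries $N$ into $E$, and it is injective by exactly the argument above (a density in $N$ producing the zero interior field must be zero), so $\dim N\le\dim E$. Conversely $u\mapsto u_n^-$ carries $E$ into $\Null(I-2D^T(\kappa))$ via the identity $u_n^-=(D^T+\half)u_n^-$ from the proof of Lemma~\ref{l:nullspace}, and it is injective because Green's representation \eqref{GRF} reconstructs $u$ from its Cauchy data $(u^-,u_n^-)=(0,u_n^-)$, so $u_n^-=0$ forces $u\equiv 0$; hence $\dim E\le\dim\Null(I-2D^T)$. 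Finally, compactness of $D$ and Fredholm theory give $\dim\Null(I-2D)=\dim\Null(I-2D^T)$, and chaining the inequalities yields $\dim N=\dim E$.
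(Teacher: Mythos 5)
Your proposal is correct; note, though, that the paper never proves this statement itself---Theorem~\ref{kress} is simply quoted from Colton--Kress \cite{CK83}---so the honest comparison is with the proof the paper \emph{does} give for its combined-field analogue, Theorem~\ref{thm: CFIE}, and your argument matches that proof's skeleton exactly. Your forward direction is, as you say, Lemma~\ref{l:nullspace} (Green's representation plus the Fredholm alternative applied to $I-2D^T$). Your converse forms the layer potential, shows its interior trace vanishes, and rules out the potential being identically zero inside by pushing the problem to the exterior: you use uniqueness of the radiating exterior \emph{Neumann} problem (legitimate, since \eqref{JR4} gives $u_n^+=u_n^-=T\varphi$), where the CFIE proof uses uniqueness of an exterior \emph{impedance} problem, and then the jump $\varphi=u^+-u^-$ from \eqref{JR3} finishes it. You also isolate correctly where simple-connectedness enters---connectedness of the complement---which is consistent with the paper's Lemma~\ref{l:hole}, where bounded complementary components generate precisely the spurious nullspaces you mention. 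The one piece of genuine added value in your write-up is the multiplicity count, which appears nowhere in the paper: the injection $\varphi\mapsto\mathcal{D}\varphi|_\Omega$ of $\Null(I-2D)$ into the eigenspace, the injection $u\mapsto u_n^-$ of the eigenspace into $\Null(I-2D^T)$, and the Fredholm equality $\dim\Null(I-2D)=\dim\Null(I-2D^T)$ chain together cleanly. In fact this chain subsumes both implications of the equivalence (either nullspace or eigenspace being nontrivial forces the other to be), so your first two paragraphs could be folded into the third with no loss.
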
     
For the case of Lipschitz boundary, see Mitrea \cite{mitrea}.
This motivates integral equations as a robust approach to the Dirichlet
eigenvalue problem for simply-connected domains;
later in Section~\ref{s:res} we will show how to handle
multiply-connected domains.

\subsection{Splitting of the kernel}

We will discuss a quadrature scheme for Helmholtz kernels
that is highly accurate for smooth boundaries \cite{kress91};
for this an analytic splitting is needed.
Assume $\Gamma$ is analytic and has a regular parametrization $x(t) = (x_{1}(t), x_{2}(t)), 0 \leq t \leq 2\pi$. We transform \eqref{eq: BIEwD} into the parametric form
\begin{equation}
\label{paraBIE}
\psi (t) - \int_{0}^{2\pi}L(t,s) \psi (s) ds = 0, \hspace{0.5cm} 0 \leq t \leq 2\pi
\end{equation}
where $\psi (t):= \varphi(x(t))$ and the kernel of the reparametrized
operator $2D(\kappa)$ is given by
\begin{eqnarray}
L(t,s) &:=& 
\frac{\partial\Phi(x(t),x(s))}{\partial n(x(s))} |x'(s)|
\label{L}
\\
&=&\frac{i\kappa}{2} \{ x'_{2}(s)[x_{1}(t)-x_{1}(s)]-x'_{1}(s)[x_{2}(t)-x_{2}(s)]\}\frac{H_{1}^{(1)}(\kappa r(t,s))}{r(t,s)} 
\end{eqnarray}
with the distance function $r(t,s) := \{ [x_{1}(t) - x_{1}(s)]^{2} + [x_{2}(t) - x_{2}(s)]^{2}\}^{\frac{1}{2}}$.

With a slight abuse of notation, at each $\kappa$ we use $L(\kappa): C[0, 2\pi] \rightarrow C[0, 2\pi]$ to denote the integral operator with $L(t,s)$ as its kernel, that is, the reparametrized operator $2D$.
We will sometimes drop the explicit dependence on $\kappa$ and write $L$.
The kernel $L$ is continuous but not analytic,
so one splits the kernel into
\begin{equation}
L(t,s) = L^{(1)}(t,s) \ln{\bigl(4\sin^2{\frac{t-s}{2}}\bigr)} + L^{(2)}(t,s)
\end{equation}
where
\begin{equation}
L^{(1)}(t,s):=-\frac{\kappa}{2\pi} \{ x'_{2}(s)[x_{1}(t)-x_{1}(s)]-x'_{1}(s)[x_{2}(t)-x_{2}(s)]\}\frac{J_{1}(\kappa r(t,s))}{r(t,s)}
\label{L1}
\end{equation}
\begin{equation}
L^{(2)}(t,s):=L(t,s)-L^{(1)}(t,s)\ln \bigl(4\sin^2{\frac{t-s}{2}}\bigr)
\label{L2}
\end{equation}
Both $L^{(1)}$ and $L^{(2)}$ are analytic, provided that $\Gamma$ is analytic
\cite{kress91}.
In that case we get the following.
\begin{lemma}
Let $\Omega$ have analytic boundary.
Then any density function $\psi(s)$ solving \eqref{paraBIE}
is an analytic function of the parameter $s$.
\label{l:anal}
\end{lemma}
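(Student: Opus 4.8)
The plan is to work from the fixed-point form of \eqref{paraBIE}, namely $\psi = L\psi$, i.e.\ $\psi(t)=\int_0^{2\pi} L(t,s)\,\psi(s)\,ds$, and to exploit the analytic splitting $L(t,s)=L^{(1)}(t,s)\ln\bigl(4\sin^2\frac{t-s}{2}\bigr)+L^{(2)}(t,s)$ with $L^{(1)},L^{(2)}$ analytic in $(t,s)$ from \eqref{L1}--\eqref{L2}. Since the Fredholm theory underlying \eqref{eq: BIEwD} already furnishes a continuous solution, I would first bootstrap smoothness: the periodic logarithmic kernel is smoothing of order one (it maps $H^s\to H^{s+1}$, equivalently $C^{k,\alpha}\to C^{k+1,\alpha}$), and multiplication by the smooth factor $L^{(1)}$ together with the analytic operator $L^{(2)}$ preserves regularity; because $\psi=L^n\psi$ for every $n$, iterating the gain places $\psi$ in every $C^k$, hence $\psi\in C^\infty[0,2\pi]$.

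To upgrade $C^\infty$ to real-analyticity I would continue $t$ into a complex strip $\{\,|\im t|<\delta\,\}$ and show $t\mapsto(L\psi)(t)$ extends analytically there. The smooth term $\int_0^{2\pi}L^{(2)}(t,s)\psi(s)\,ds$ is immediately analytic: $L^{(2)}(t,s)$ continues analytically in $t$ on a fixed strip with bounds uniform in $s\in[0,2\pi]$, so Morera's theorem (or differentiation under the integral) applies even for merely continuous $\psi$. The logarithmic term is the crux. For $\im t\neq0$ the factor $\ln\bigl(4\sin^2\frac{t-s}{2}\bigr)$ is analytic in $t$ for every real $s$, so this term defines analytic functions on the two half-strips $\{0<\pm\im t<\delta\}$; the obstacle is that the diagonal singularity at $s=\re t$ blocks the naive route of deforming the $s$-contour into the complex plane, since that would already presuppose the analyticity of $\psi$ that we are trying to prove.

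I expect this diagonal singularity to be the main difficulty, and I would close it by a quantitative derivative estimate. Differentiating $\psi=L\psi$ in $t$ brings down the highly singular factors $\partial_t^n\ln\bigl(4\sin^2\frac{t-s}{2}\bigr)$, which behave like $(t-s)^{-n}$ and are not integrable; integrating by parts in $s$ transfers these derivatives onto the analytic factor $L^{(1)}$ and onto $\psi$ itself, producing a recursion for $\|\psi^{(n)}\|_\infty$ that, using the Cauchy-type bounds $|\partial_t^n L^{(i)}|\le C M^n n!$ available from analyticity of $L^{(1)},L^{(2)}$, closes to give $\|\psi^{(n)}\|_\infty\le C'(M')^n n!$ and hence a positive radius of analyticity. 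As an independent check I would also argue potential-theoretically: a null density $\varphi$ of $I-2D(\kappa)$ produces $u=\mathcal D\varphi$ with $u^-=0$, so $\varphi=u^+-u^-$, while $u$ solves the Helmholtz equation on each side of the analytic curve $\Gamma$ with analytic Cauchy data (here $u^-=0$ and the continuous normal derivative $u_n^+=u_n^-$); by Morrey's up-to-boundary analyticity for elliptic equations with analytic coefficients on an analytic boundary, $u$, and therefore its trace $\varphi$, is analytic along $\Gamma$, giving analyticity of $\psi(t)=\varphi(x(t))$. This second route also transparently covers the non-simply-connected case of Lemma~\ref{l:nullspace}.
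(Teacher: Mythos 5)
Your main route has a genuine gap at exactly the step you flag as the crux. The paper's own proof is a one-sentence citation of Kress \cite[Prob.~12.4, p.~217]{LIE}: the operator $L$ is compact on the Banach space of $2\pi$-periodic functions analytic in a strip $\mathbb{R}\times(-a,a)$, and the Riesz--Fredholm theory in that space then yields analyticity of null functions. Your substitute for this---a factorial bound $\|\psi^{(n)}\|_\infty\le C'(M')^n n!$ obtained by differentiating $\psi=L\psi$ and integrating by parts---does not close as described. Carry it out: since the singular factor depends only on $t-s$, one has $\partial_t\ln\bigl(4\sin^2\frac{t-s}{2}\bigr)=-\partial_s\ln\bigl(4\sin^2\frac{t-s}{2}\bigr)$, so integration by parts converts $t$-derivatives into operators $L_k$ whose kernels are $(\partial_t+\partial_s)^k$ applied to the kernel of $L$; these do obey Cauchy-type estimates $\|L_k\|\le CM^k k!$ because $(\partial_t+\partial_s)$ never hits the log factor. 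But the identity one obtains is
\begin{equation*}
\psi^{(n)} \;=\; L\,\psi^{(n)} \;+\; \sum_{k=1}^n\binom{n}{k}\,L_k\,\psi^{(n-k)},
\end{equation*}
i.e.\ the $n$-th derivative reappears on the right-hand side under $L$. You cannot absorb that term: doing so would require inverting $I-L$, which is precisely the operator that is singular in the situation of interest ($1$ is an eigenvalue of $L$, so in particular $\|L\|\ge 1$, and no smallness is available). The recursion can be rescued, but only with machinery your sketch never invokes: a bounded generalized inverse $R$ with $R(I-L)=I-P$, where $P$ projects onto the finite-dimensional kernel, together with a separate bound on the kernel component $P\psi^{(n)}$ (for instance, realizing $P$ via analytic functionals $\chi_j$ and integrating by parts $n$ times, so that $|\langle\chi_j,\psi^{(n)}\rangle|=|\langle\chi_j^{(n)},\psi\rangle|\le CM^n n!\,\|\psi\|_\infty$); only then does the majorant induction close. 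As written, the central claim of your first argument is asserted at the one point where the real work lies.

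By contrast, your ``independent check'' is a correct proof and is genuinely different from both your first route and the paper's. From \eqref{JR3}, a null density $\varphi$ of $I-2D(\kappa)$ gives $u=\mathcal{D}\varphi$ with $u^-=(D-\half)\varphi=0$ and $\varphi=u^+-u^-=u^+$; Morrey--Nirenberg analytic regularity up to an analytic boundary, applied first on the interior side (Helmholtz equation with zero, hence analytic, Dirichlet data) makes $u$ analytic up to $\Gamma$ from inside, so $u_n^-$ is analytic; by \eqref{JR4} the normal derivative is continuous across $\Gamma$, so the exterior side carries analytic Neumann data and the same theorem gives analyticity of $u^+=\varphi$; composing with the regular analytic parametrization $x(t)$ gives analyticity of $\psi$. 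Two caveats: this must be preceded by your $C^\infty$ (or at least $C^{1,\alpha}$) bootstrap, since boundary elliptic regularity needs more than continuity of $u$ up to $\Gamma$ and the double-layer potential of a merely continuous density need not be $C^1$ up to the boundary; and the argument should be run on each connected component of $\mathbb{R}^2\setminus\Gamma$, which it survives---this is why, as you note, it also covers the multiply-connected setting of Lemma~\ref{l:nullspace}. Comparing the two: the paper's strip-space argument stays inside the integral equation and is quantitative, exhibiting the strip half-width $a$ that governs the exponential rate $\alpha$ in Theorem~\ref{t:main} (see the remark following it), whereas your PDE route is representation-independent---it never uses the splitting \eqref{L1}--\eqref{L2} and transfers essentially verbatim to other representations such as the CFIE of Section~\ref{s:res}---at the price of leaving the width of the strip implicit in the elliptic regularity theorem.
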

This follows from the argument
of \cite[Prob.\ 12.4, p.~217]{LIE}, namely that the operator $L$ is compact in the space of $2\pi$-periodic analytic
functions in a complex strip $\mathbb{R}\times (-a,a)$ for some $a>0$,
and the Fredholm alternative.

\subsection{Quadrature and Nystr\"om method}
\label{s:quad}

We choose a set of quadrature points equidistant in parameter,
$s_{k} := \frac{2\pi k}{N}$, $k = 0,1,..., N-1$,
where $N$ is an even number, with equal weights $2\pi/N$,
and insert this quadrature into \eqref{paraBIE}
to get the approximation
\begin{equation}
\label{eq: FDAppr}
\psi^{(N)}(t) - \sum_{k=0}^{N-1} \{ R^{(N)}_{k}(t)L^{(1)}(t, s_{k}) + \frac{2\pi}{N} L^{(2)}(t, s_{k})\} \psi^{(N)}(s_{k}) = 0, \hspace{0.5cm} 0\leq t \leq 2\pi.
\end{equation}
Here the second term inside the curly brackets arises from the usual quadrature
rule, whereas the first term arises from a spectrally-accurate
product quadrature
scheme for the periodized log singularity (reviewed in \cite[Sec.~6]{hao}),
with weights
\begin{equation}
R^{(N)}_k(t) = -\frac{4\pi}{N} \sum_{m=1}^{\frac{N}{2}-1}\frac{1}{m}\cos{m(t-s_{k})} - \frac{4\pi}{N^{2}} \cos{\frac{N}{2}(t-s_{k})}, \hspace{0.5cm}
k = 0,\ldots,N-1
\end{equation}
Define $L_N$ to be the Nystr\"om interpolant from \eqref{eq: FDAppr},
which maps $\psi \in C[0, 2\pi]$ to
\begin{equation}
\label{defl}
L_{N} \psi (t) = \sum_{k=0}^{N-1} \{ R^{(N)}_{k}(t)L^{(1)}(t, s_{k}) + \frac{2\pi}{N} L^{(2)}(t, s_{k})\} \psi(s_{k})
\end{equation}
Kress \cite[Sec.~12.3]{LIE}
showed that, when $L^{(1)}$ and $L^{(2)}$ have analytic kernels,
interpolation of analytic functions
with this product quadrature convergences exponentially with $N$ in the $L_{\infty}$-norm.
Thus for each analytic $\psi$, $\Vert L_{N} \psi - L\psi \Vert _{\infty} \leq Ce^{-aN}$ for some constants $C$ and $a$ depending on $\psi$ \cite[p.~185]{LIE}.

By setting $t$ to $s_i$ in \eqref{defl}, one obtains the Nystr\"om matrix $M_N$ with elements
\begin{equation}
\label{nm}
(M_N)_{ij} := R^{(N)}_{|i-j|}(0) L^{(1)}(s_{i},s_{j})+\frac{2\pi}{N}L^{(2)}(s_{i},s_{j})
\qquad i,j = 0,\dots, N-1.
\end{equation}
The condition \eqref{paraBIE} that $I-2D(\kappa)$, and hence $I-L(\kappa)$,
is singular
can now be approximated with exponentially small error
by the condition that the matrix $I-M_N(\kappa)$ is singular.
Each null vector of $I-L_N$ is exactly reconstructed
by applying the interpolant on the right-hand side of \eqref{defl} to the
corresponding null vector of the matrix.
By the analysis in \cite[Sec.~12.2-12.3]{LIE} in the homogeneous case,
this reconstructs the desired null vectors of $I-L$ to exponential
accuracy.

\section{The Fredholm determinant}
\label{s:fred}
As we have seen,
for simply-connected domains,
$\kappa$ is an eigenfrequency if and only if the boundary integral operator $I-L{(\kappa)}$ has nontrivial kernel.
We now convert this to a condition on a Fredholm determinant.

The following theorem says that we can study the invertibility of $I-L$ on $L_{2}[0, 2\pi]$ instead of $C[0, 2\pi]$.
\begin{theorem}\cite{hahner} \cite[p.91]{LIE}
Let $A$ be an integral operator with weakly singular kernel, then the nullspaces of $I-A$ in $C[0, 2\pi]$ and $L_{2}[0, 2\pi]$ coincide.
\end{theorem}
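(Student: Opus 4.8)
The plan is to prove equality of the two nullspaces by establishing the two inclusions separately, with the nontrivial direction resting on a smoothing (regularity) property of weakly singular operators. Since $[0,2\pi]$ has finite measure, $C[0,2\pi]\subset L_2[0,2\pi]$, and any $\varphi\in C[0,2\pi]$ with $\varphi=A\varphi$ pointwise satisfies the same identity in the $L_2$ sense; hence the inclusion $\Null_{C}(I-A)\subseteq \Null_{L_2}(I-A)$ is immediate. (Here I tacitly use that a weakly singular $A$ is a bounded operator on both $C[0,2\pi]$ and $L_2[0,2\pi]$ and that the two actions agree.) The entire content lies in the reverse inclusion $\Null_{L_2}(I-A)\subseteq \Null_{C}(I-A)$.

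For the reverse inclusion, I would take $\varphi\in L_2[0,2\pi]$ with $\varphi=A\varphi$ almost everywhere and argue that $A\varphi$ always has a continuous representative, forcing $\varphi\in C[0,2\pi]$. The cleanest route is Cauchy--Schwarz: writing $K(t,s)$ for the kernel, weak singularity gives $|K(t,s)|\le M\,|t-s|^{-\beta}$ (in the periodic chord distance) with exponent $\beta<1$, and
\[
|A\varphi(t)-A\varphi(t')| \;\le\; \|K(t,\cdot)-K(t',\cdot)\|_{L_2}\,\|\varphi\|_{L_2}.
\]
When $\beta<\tfrac12$ the kernel slices $K(t,\cdot)$ lie in $L_2$ and vary continuously with $t$ in the $L_2$ norm, so $A\varphi$ is continuous and $\varphi=A\varphi\in C[0,2\pi]$. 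In the present application the kernel is only logarithmically singular, from the $L^{(1)}\ln(\cdots)+L^{(2)}$ splitting, so this one-step estimate applies with room to spare.

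The main obstacle is covering a general weakly singular kernel, whose exponent $\beta$ may lie in $[\tfrac12,1)$; there $K(t,\cdot)$ need not be square-integrable and the one-step argument breaks down. Here I would bootstrap using iterated kernels: the composition of two weakly singular kernels with exponents $\beta_1,\beta_2$ is again weakly singular with exponent $\beta_1+\beta_2-1$, and in fact continuous once that sum drops below $1$, via the standard convolution estimate
\[
\int |t-s|^{-\beta_1}\,|s-u|^{-\beta_2}\,ds \;\le\; C\,|t-u|^{1-\beta_1-\beta_2}.
\]
Iterating, $A^m$ acquires a continuous kernel as soon as $m(1-\beta)>1$, so $A^m:L_2\to C$ is bounded. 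Since $A$ is bounded on $L_2$, the fixed-point relation propagates, $\varphi=A\varphi=A^2\varphi=\cdots=A^m\varphi$, and the last member is continuous; therefore $\varphi\in C[0,2\pi]$. This completes the reverse inclusion and hence the theorem.
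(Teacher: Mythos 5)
Your argument is essentially correct, but note that the paper itself offers no proof of this statement: it is imported verbatim from the cited sources (H\"ahner; Kress, \emph{Linear Integral Equations}, p.~91), so the meaningful comparison is with the proof given there. That proof is genuinely different from yours. H\"ahner's trick is to split $A = A_\delta + B_\delta$, where $B_\delta$ keeps the kernel on a $\delta$-neighborhood of the diagonal and $A_\delta$ has a continuous kernel; for $\delta$ small, $\|B_\delta\|<1$ holds \emph{simultaneously} on $C[0,2\pi]$ and on $L_2[0,2\pi]$, so $\varphi = A\varphi$ gives $\varphi = (I-B_\delta)^{-1}A_\delta\varphi = \sum_{k\ge 0} B_\delta^k A_\delta \varphi$, a Neumann series converging in both norms; since $A_\delta\varphi$ is continuous, the $C$-limit is continuous, and since the two limits must agree a.e., $\varphi$ has a continuous representative. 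Your route instead bootstraps through iterated kernels: Cauchy--Schwarz handles exponents $\beta<\tfrac12$ directly, and the composition estimate upgrades $A^m$ to a continuous kernel once $m(1-\beta)>1$, after which $\varphi = A^m\varphi$ forces continuity. Both are valid. Yours is more elementary and, as you observe, the paper's logarithmic kernels need only the one-step Cauchy--Schwarz argument; the splitting argument buys uniformity---it treats every weakly singular exponent (and kernels on surfaces in higher dimensions) in one stroke, without needing the composition lemma for iterated kernels, which is why it is the textbook choice. Two details you should tighten: (i) if at some composition step the exponents sum to exactly $1$, the iterated kernel is logarithmic rather than continuous---still weakly singular, so one further iteration repairs this; (ii) at the very end, having shown $\varphi$ agrees a.e.\ with a continuous function, you should add that $\varphi = A\varphi$ then holds \emph{everywhere} (both sides being continuous functions that agree a.e.), so that $\varphi$ genuinely lies in $\Null_{C}(I-A)$ and not merely in its $L_2$ closure.
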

This implies that $L$ has the same set of nonzero eigenvalues, counting multiplicities, in $C[0, 2\pi]$ as in $L_{2}[0, 2\pi]$.
Thus from now on we need not specify in which space we consider these eigenvalues.

Let $\mathcal{J}_{1}(L_{2}[0, 2\pi])$ be the space of {\em trace-class}
operators in $L_{2}[0, 2\pi]$.
This space is defined by finiteness of the
operator norm
$\|A\|_{\mathcal{J}_1}$, which is the sum of the operator singular values
\cite{bornemann}; this insures that the sum of the eigenvalues is also bounded.
\begin{lemma}
$L$ with kernel given by \eqref{L} is a trace-class operator.
\label{l:tr}
\end{lemma}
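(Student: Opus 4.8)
The plan is to exploit the analytic kernel splitting $L(t,s) = L^{(1)}(t,s)\ln\!\bigl(4\sin^2\tfrac{t-s}{2}\bigr) + L^{(2)}(t,s)$ and to control the trace norm through the Fourier coefficients of the kernel. Writing $A_1$ and $A_2$ for the integral operators with kernels $L^{(1)}(t,s)\ln\!\bigl(4\sin^2\tfrac{t-s}{2}\bigr)$ and $L^{(2)}(t,s)$ respectively, so that $L = A_1 + A_2$, it suffices to show each is trace class. In the orthonormal Fourier basis $\{(2\pi)^{-1/2}e^{imt}\}$ of $L_2[0,2\pi]$, the matrix entries of such an operator are exactly the two-dimensional Fourier coefficients $\hat K(m,n)$ of its periodic kernel $K$. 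Since each entry contributes a rank-one operator of trace norm $|\hat K(m,n)|$, the triangle inequality gives $\|A\|_{\mathcal{J}_1}\le \sum_{m,n}|\hat K(m,n)|$. Thus I would reduce the whole lemma to showing that the kernels have absolutely summable Fourier coefficients, for which, by Cauchy--Schwarz, it is enough that the kernel lie in $H^s([0,2\pi]^2)$ for some $s>1$ (the dimension being two, the lattice sum $\sum_{m,n}(1+m^2+n^2)^{-s}$ converges precisely when $s>1$).

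For $A_2$ this is immediate: $L^{(2)}$ is analytic on the torus, so its Fourier coefficients decay faster than any polynomial and are trivially summable. The work is in $A_1$, where the essential point is geometric. The numerator $x_2'(s)[x_1(t)-x_1(s)]-x_1'(s)[x_2(t)-x_2(s)]$ is the projection of the chord $x(t)-x(s)$ onto the normal direction at $s$; since a smooth curve departs from its tangent line only quadratically, this quantity is $O((t-s)^2)$, while $J_1(\kappa r)/r\to \kappa/2$ is bounded and analytic in $r^2$. Hence $L^{(1)}$ is analytic and vanishes to second order on the diagonal, so near $t=s$ the kernel of $A_1$ behaves like $(t-s)^2\ln|t-s|$. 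This is $C^{1,\alpha}$ for every $\alpha<1$, hence lies in $H^s([0,2\pi]^2)$ for some $s\in(1,2)$, which by the previous paragraph makes $A_1$ trace class. Adding the two pieces proves the lemma.

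The main obstacle is precisely this regularity bookkeeping for $A_1$: one must verify that multiplying the logarithm by the second-order-vanishing factor $L^{(1)}$ buys strictly more than one derivative of Sobolev smoothness. That the quadratic vanishing is genuinely needed, and not a cosmetic convenience, can be seen from the bare logarithmic kernel $\ln\!\bigl(4\sin^2\tfrac{t-s}{2}\bigr)$, whose Fourier coefficients decay only like $1/|m|$ along the antidiagonal; it is Hilbert--Schmidt but not trace class, since $\sum 1/|m|$ diverges. The second-order zero of $L^{(1)}$ is exactly what promotes this borderline $1/|m|$ decay to summable decay. An alternative to the Sobolev embedding, should one prefer to avoid fractional spaces, is to factor $A_1=BC$ with $B,C$ Hilbert--Schmidt using the smoothing afforded by the vanishing factor; either route isolates the same quadratic-vanishing mechanism as the crux of the argument.
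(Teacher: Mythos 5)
Your proof is correct, and it rests on exactly the same key observation as the paper's: the coefficient multiplying the logarithmic singularity vanishes to second order on the diagonal, so the non-analytic part of $L(t,s)$ is $O((t-s)^2\ln|t-s|)$, giving the kernel slightly more than one full derivative of smoothness. Where you diverge is in how that regularity is converted into trace-class membership. The paper's proof is three lines: it reads off the $O((t-s)^2\log|t-s|)$ behavior from the Bessel asymptotics, concludes that $L(t,s)$ and $\partial_s L(t,s)$ are continuous on $[0,2\pi]^2$, and cites Bornemann's sufficient condition (continuity of the kernel together with one partial derivative implies trace class on $L_2$). You instead build the criterion from scratch: expand the kernel in the two-dimensional Fourier basis, bound $\|A\|_{\mathcal{J}_1}$ by the $\ell^1$ norm of the Fourier coefficients, reduce absolute summability to $K\in H^s([0,2\pi]^2)$ with $s>1$ via Cauchy--Schwarz, and verify this for each piece of the Kress splitting. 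Both routes are sound; yours is self-contained and makes the borderline nature of the estimate explicit---your observation that the bare log kernel is Hilbert--Schmidt but not trace class is precisely the reason the paper must later remark (Section 7) that the single-layer operator $S$ in the CFIE is \emph{not} trace class, its singular values decaying like $1/j$---whereas the paper's proof is shorter and outsources the functional-analytic work to a citation. Two steps you would need to flesh out in a formal write-up: the embedding $C^{1,\alpha}([0,2\pi]^2)\subset H^s$ for $1<s<1+\alpha$ (standard via the Sobolev--Slobodeckij seminorm, or avoidable entirely by computing directly that the Fourier coefficients of $u^2\ln|u|$ decay like $|m|^{-3}$, so the antidiagonal lattice sum converges), and the factorization $L^{(1)}(t,s)=(t-s)^2h(t,s)$ with $h$ analytic, which follows from Taylor expansion in $u=t-s$ with analytic coefficients. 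Neither is a gap, just bookkeeping.
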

\begin{proof}
Using the Bessel function asymptotic \cite[10.8.1]{DLMF},
the leading non-analytic term in $L(t,s)$ is
$O((t-s)^2\log |t-s|)$ for small $t-s$,
thus $L(t,s)$ and the partial derivative
$\partial_{s}L(t,s)$ are continuous on $[0, 2\pi]^2$,
thus $L$ is trace class on $L_{2}[0,2\pi]$ \cite{bornemann}.
\end{proof}
For trace-class operators, the Fredholm determinant as a linear functional can be constructed in several equivalent ways; we take the approach of
Gohberg and Krein \cite[p.~157]{GK69}.
Let $\mathcal H$ be a Hilbert space. For $A \in \mathcal{J}_{1}(\mathcal{H)}$ with nonzero eigenvalues $\lambda_{1}(A), \lambda_{2}(A),\ldots$ (counting multiplicities), the Fredholm determinant of $I-A$ is defined by
\begin{equation}
\label{fd1}
\det(I-A) : = \displaystyle \prod_{j=1}^\infty(I-\lambda_{j}(A))
\end{equation}

One important property of the Fredholm determinant is that it completely describes when $I-A$ is invertible:
\begin{theorem}\cite[p.~34]{simon}
\label{invTr}
For  $A \in \mathcal{J}_{1}(\mathcal{H)}$, $\det(I-A)\neq 0$ if and only if $I-A$ is invertible.
\end{theorem}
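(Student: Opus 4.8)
The plan is to read off both sides of the equivalence directly from the eigenvalues $\lambda_j(A)$ appearing in the definition \eqref{fd1}. First I would note that, since $A$ is trace class, it is in particular compact, and its nonzero eigenvalues are absolutely summable, $\sum_j |\lambda_j(A)| < \infty$, as observed just after the definition of $\mathcal{J}_1$ above. This summability is exactly what makes the infinite product in \eqref{fd1} converge: a standard fact about infinite products states that if $\sum_j |a_j| < \infty$, then $\prod_j (1 + a_j)$ converges, and its value vanishes if and only if some individual factor $1 + a_j$ is zero. Applying this with $a_j = -\lambda_j(A)$ shows that $\det(I - A) = 0$ if and only if $\lambda_j(A) = 1$ for some $j$.

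The second ingredient is the Riesz--Schauder theory for compact operators. Because $A$ is compact, its spectrum away from the origin consists only of eigenvalues of finite multiplicity, and by the Fredholm alternative $I - A$ fails to be invertible precisely when $1$ is such an eigenvalue of $A$. Thus $I - A$ is not invertible if and only if $\lambda_j(A) = 1$ for some $j$, which is the same condition that was just shown to characterize $\det(I - A) = 0$.

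Chaining these two equivalences gives $\det(I - A) = 0 \iff I - A \text{ is not invertible}$, and taking the contrapositive yields the theorem as stated. The main point to get right is the passage from ``a single factor of the product vanishes'' to the statement about invertibility, and this rests on two classical facts that must be invoked cleanly: the absolute summability of the eigenvalues of a trace-class operator, so that the product genuinely converges and can vanish only through a zero factor; and the compactness of $A$, so that $1$ lies in the spectrum exactly when it is a genuine eigenvalue. Neither step requires computation, so I would present the result as a citation-backed assembly of these two facts rather than a from-scratch argument.
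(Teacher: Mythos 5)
Your proof is correct, but there is nothing in the paper to compare it against: Theorem~\ref{invTr} is imported by citation from Simon's monograph and is given no proof in the paper itself. Relative to the framework the paper actually sets up, your argument is the natural one, and both of its ingredients are invoked correctly. Since the paper adopts the Gohberg--Krein definition \eqref{fd1}, in which $\det(I-A)$ \emph{is} the eigenvalue product, Weyl's inequality $\sum_j|\lambda_j(A)|\le\|A\|_{\mathcal{J}_1}<\infty$ (exactly the summability remark the paper makes when introducing $\mathcal{J}_1$) gives absolute convergence of the product, so it can vanish only through a vanishing factor $1-\lambda_j(A)$; and Riesz--Schauder theory for the compact operator $A$ identifies non-invertibility of $I-A$ with $1$ being an eigenvalue (this also covers the degenerate case where $A$ has no nonzero eigenvalues: the empty product equals $1$ and $I-A$ is indeed invertible). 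The one caveat worth recording is that your argument proves the theorem for the determinant \emph{as defined by} \eqref{fd1}; in the cited source the determinant is defined differently, by a convergent series of traces of exterior powers, and the identification of that object with the eigenvalue product is Lidskii's theorem, a genuinely deeper result---which is why Simon's own proof of the invertibility criterion proceeds by other means (multiplicativity and reduction to the finite-rank case) rather than by your route. Within this paper the distinction is immaterial, since \eqref{fd1} is taken as the definition, and your proof is then a complete, self-contained justification of the theorem as it is used here.
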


\begin{corollary}
\label{iffL}
L with kernel given by \eqref{L} satisfies $\det{(I-L)}=0$ if and only if $I-L$ has nontrivial kernel space.
\end{corollary}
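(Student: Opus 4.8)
The plan is to combine the two results just established: Lemma~\ref{l:tr}, which guarantees that $L \in \mathcal{J}_{1}(L_{2}[0,2\pi])$, and Theorem~\ref{invTr}, which characterizes the vanishing of the Fredholm determinant for trace-class operators. First I would invoke Lemma~\ref{l:tr} to place $L$ under the hypothesis of Theorem~\ref{invTr} with $A = L$, yielding directly that $\det(I-L) \neq 0$ if and only if $I-L$ is invertible. Negating both sides of this equivalence gives that $\det(I-L)=0$ if and only if $I-L$ fails to be invertible, which is already one half of the chain.

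It then remains to identify the statement ``$I-L$ is not invertible'' with ``$I-L$ has nontrivial kernel''. This is exactly the content of the Fredholm alternative, which applies here because $L$, being trace class, is in particular compact (the trace-class operators are contained in the Hilbert--Schmidt, hence compact, operators). For a compact operator, $I-L$ is injective if and only if it is bijective, so having trivial kernel is equivalent to invertibility; taking the contrapositive, $I-L$ is non-invertible precisely when its kernel is nontrivial. Chaining this equivalence with the one from the previous paragraph completes the argument.

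I do not anticipate any genuine obstacle: the statement is a direct corollary of Lemma~\ref{l:tr} and Theorem~\ref{invTr}, and the only point deserving an explicit line of justification is the passage from non-invertibility to a nontrivial kernel, which the compactness of $L$ supplies via the Fredholm alternative.
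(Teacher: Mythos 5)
Your proposal is correct and follows essentially the same route as the paper: the paper likewise applies Theorem~\ref{invTr} and then equates non-invertibility with a nontrivial nullspace via the Riesz theory for compact operators (it cites the ``third Riesz theorem'' of Colton--Kress, which is exactly your Fredholm-alternative step). Your additional remark that trace class implies compact is a fine, slightly more explicit justification of the same ingredient.
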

\begin{proof}
The third Riesz theorem \cite[page 11]{CK83} says if $I-L$ is not surjective, it is not injective.
The claim follows from Theorem \ref{invTr}.
\end{proof}

As we will see in Section \ref{s:thm}, the nonzero eigenvalues of $L$ will be approximated numerically by the nonzero eigenvalues of $L_{N}$ in $C[0, 2\pi]$. The following lemma connects those nonzero eigenvalues of $L_{N}$ to the ones of the Nystr\"om matrix, making accurate numerical approximation of $\det(I-L)$ possible.

\begin{lemma}
\label{fnm}
The collection of nonzero eigenvalues, counting multiplicities, of $ L_{N}$, defined in \eqref{defl}, is the same as the nonzero eigenvalues of the associated Nystr\"om matrix $M_{N}$ as defined in \eqref{nm}.
\end{lemma}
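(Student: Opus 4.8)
The plan is to recognize $L_N$ as a finite-rank operator that factors through $\mathbb{C}^N$, and to identify $M_N$ as the companion product obtained by composing the two factors in the opposite order; the equality of nonzero eigenvalues then follows from the classical fact that $AB$ and $BA$ share their nonzero spectra with the same algebraic multiplicities.

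First I would introduce the two natural maps relating $C[0,2\pi]$ and $\mathbb{C}^N$. Let $R\colon C[0,2\pi]\to\mathbb{C}^N$ be the sampling map $R\psi=(\psi(s_0),\dots,\psi(s_{N-1}))^{T}$, and let $E\colon\mathbb{C}^N\to C[0,2\pi]$ be the synthesis map $(E\mathbf v)(t)=\sum_{k=0}^{N-1} a_k(t)\,v_k$, where $a_k(t):=R^{(N)}_k(t)L^{(1)}(t,s_k)+\frac{2\pi}{N}L^{(2)}(t,s_k)$ is exactly the coefficient of $\psi(s_k)$ appearing in \eqref{defl}. With this notation $L_N=ER$ directly from \eqref{defl}, since $L_N\psi(t)=\sum_k a_k(t)\psi(s_k)=(E(R\psi))(t)$. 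In particular $L_N$ has rank at most $N$, so its nonzero spectrum consists of finitely many eigenvalues of finite algebraic multiplicity, and no continuous-spectrum subtleties can arise.

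The key computation is that $M_N=RE$. Indeed $(RE\mathbf v)_i=(E\mathbf v)(s_i)=\sum_k a_k(s_i)v_k$, so I must check $a_k(s_i)=(M_N)_{ik}$. Comparing with \eqref{nm}, this reduces to the identity $R^{(N)}_k(s_i)=R^{(N)}_{|i-k|}(0)$, which holds because the quadrature weight depends on $t$ and $s_k$ only through the even, $2\pi$-periodic cosines $\cos m(t-s_k)$, and $s_i-s_k=2\pi(i-k)/N$. This identity is the only genuine calculation in the proof, and it is routine.

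Finally, with $L_N=ER$ and $M_N=RE$, I would invoke the standard result that for bounded operators $E,R$ between Banach spaces (one of them, $\mathbb{C}^N$, finite-dimensional), $ER$ and $RE$ have the same nonzero eigenvalues with the same algebraic multiplicities. The set equality is elementary: if $L_N\psi=\lambda\psi$ with $\lambda\neq0$ then $\mathbf v:=R\psi$ satisfies $M_N\mathbf v=\lambda\mathbf v$ and is nonzero (else $\lambda\psi=ER\psi=0$ forces $\psi=0$), and conversely $\psi:=\lambda^{-1}E\mathbf v$ recovers an eigenfunction of $L_N$. The part that requires care---and is the main obstacle---is matching algebraic rather than merely geometric multiplicities, i.e.\ verifying that $R$ and $E$ carry the generalized eigenspace of $ER$ at $\lambda\neq0$ isomorphically onto that of $RE$; this is precisely the content of the $AB$/$BA$ spectral lemma, and it is the step I would cite rather than reprove. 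The algebraic-multiplicity version is exactly what is needed downstream, since it guarantees $\det(I-L_N)=\det(I-M_N)$ as the product over nonzero eigenvalues.
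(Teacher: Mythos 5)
Your proof is correct, but it takes a genuinely different route from the paper's. The paper argues by direct construction: given an eigenpair of $L_N$ with $\lambda\neq 0$, it samples the eigenfunction at the quadrature nodes to obtain an eigenvector of $M_N$ (nonzero because a function killed at all nodes is killed by $L_N$), and conversely, given an eigenvector of $M_N$ it applies the interpolation formula \eqref{defl} divided by $\lambda$ to manufacture an eigenfunction of $L_N$; linear independence of the resulting families is then checked by hand in both directions, so the eigenspaces have equal dimension. You instead factor $L_N=ER$ and $M_N=RE$ through $\mathbb{C}^N$ and cite the classical $AB$/$BA$ spectral lemma; your identity $R^{(N)}_k(s_i)=R^{(N)}_{|i-k|}(0)$, which holds by evenness and periodicity of the cosines, is exactly what makes $M_N=RE$ true, and the paper uses it silently in writing down \eqref{nm}. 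The two arguments are the same at heart---the paper's sampling and reconstruction maps are precisely your $R$ and $\lambda^{-1}E$---but your version buys something the paper's does not: the $AB$/$BA$ lemma matches \emph{algebraic} multiplicities (indeed it gives isomorphic generalized eigenspaces at each $\lambda\neq 0$), whereas the paper's explicit bijection of eigenvectors only matches \emph{geometric} multiplicities. Since the lemma is later invoked to write $\det(I-M_N(\kappa_j))=\prod_i\bigl(1-\lambda_i^{(N)}\bigr)$, which is a statement about algebraic multiplicities, your formulation is the one actually needed downstream, and it quietly closes a small gap in the paper's own argument.
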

\begin{proof}
If $\lambda$ is a nonzero eigenvalue of $L_{N}$, then there exists a finite dimensional eigenspace with
basis $\{\varphi_{i}\}$ such that $L_{N}\varphi_{i} = \lambda \varphi_{i}$ holds on $[0, 2\pi]$. Certainly it holds on all the quadrature nodes, 
meaning $M[\varphi_{i}(s_{k})]_{k =0}^{N-1}=\lambda [\varphi_{i}(s_{k})]_{k=0}^{N-1}$,
where $[\varphi_{i}(s_{k})]_{k =0}^{N-1}$ indicates a column vector.
It cannot be true that $\varphi_{i}$ is simultaneously zero at all quadrature nodes, since then by ~\eqref{defl}, $\varphi_{i}$ is identically zero on $[0, 2\pi]$. By the same reasoning the set of $[\varphi_{i}(s_{k})]_{k=0}^{N-1}$ for all $i$
is a linearly independent set of eigenvectors of $M_{N}$ with eigenvalue $\lambda$. 

If on the other hand $\lambda$ is a nonzero eigenvalue of $M_{N}$, then there exists a finite dimensional eigenspace with a basis spanned by the vectors $\{[\phi_{i,k}]_{k=0}^{N-1}\}$. For each $i$ we can construct $\varphi_{i} (t) =  \frac{1}{\lambda}\sum_{k=0}^{N-1} \{ R^{(N)}_{k}(t)L^{(1)}(t, s_{k}) + \frac{2\pi}{N} L^{(2)}(t, s_{k})\} \phi_{i,k}$, then $[\varphi_{i}(s_{k})]_{k=0}^{N-1} = \frac{1}{\lambda} M_{N} [\phi_{i,k}]_{k=0}^{N-1} = [\phi_{i,k}]_{k=0}^{N-1}$. One sees that $\varphi_{i}$ is an eigenfunction of $L_{N}$ with eigenvalue $\lambda$, and $\{\varphi_{i}\}$ is a linearly independent set because the set $\{[\phi_{i,k}]_{k=0}^{N-1}\}$ is.
\end{proof}

The Fredholm determinant is a function of $\kappa$, and we use the
notation \eqref{fk} for the determinant of the exact operator.
Similarly we use, for the matrix determinant of the associated Nystr\"om matrix,
\begin{equation}
\label{fnk}
f_{N}(\kappa) := \det(I -M_{N}{(\kappa)}) ~.
\end{equation}

\begin{remark}
In fact, from Lemma \ref{fnm}, it follows that $f_{N}$ is the Fredholm determinant of $I-L_{N}$ as a finite dimensional operator on $C[0, 2\pi]$. The definition of Fredholm determinant for certain operators on a Banach space and more can be found in \cite{gohberg}.
\end{remark}


\begin{figure}[!ht]
  \centering
   \includegraphics[width=0.4\textwidth]{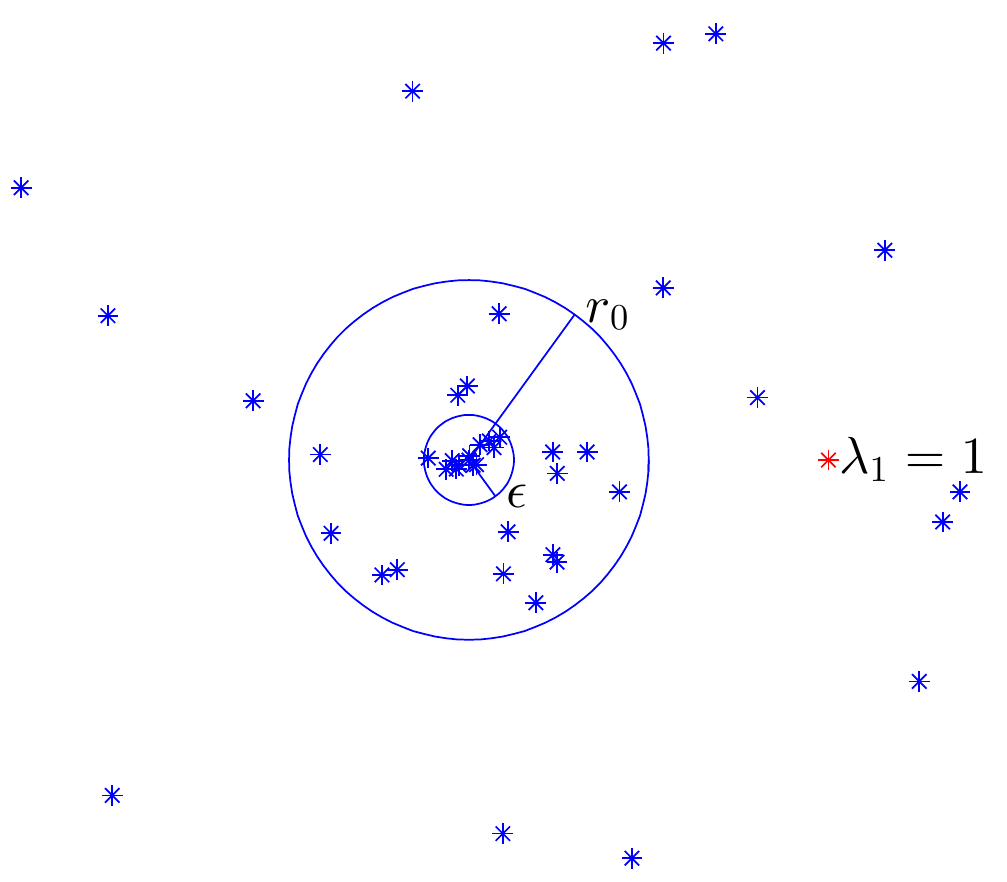}
   \caption{Illustration of proof idea for the main Theorem~\ref{t:main}, showing spectrum of $L{(\kappa_{j})}$, and circles of radius $r_{0}$ and $\epsilon$.}
     \label{fig: proof}
\end{figure}

\section{Error analysis of the Fredholm determinant}
\label{s:thm}
We prove our main error analysis result in this section. The approximation sequence $\{L_{N}\}$ converges pointwise to the integral operator $L$ on $C[0, 2\pi]$, and is collectively compact \cite[p.~202]{LIE}. The following two theorems of Atkinson describe the convergence of eigenvalues of $L_{N}$ to the ones of $L$.

\begin{theorem}\cite{atkinspec}
\label{Atkinson1}
Let $K$ be an integral operator on a Banach space and $\{K_N\}$ be a collectively compact sequence of numerical integral operators approximating $K$ pointwise, and let $R$ and $\epsilon$ be arbitrary small positive numbers. Then there is an $N_{0}$ such that for $N \geq N_{0}$, any eigenvalue $\lambda$ of $K_{N}$ satisfying $\vert \lambda \vert \geq R$ is within $\epsilon$ of an eigenvalue $\lambda_{0}$ of $K$ with $\vert \lambda_{0} \vert \geq R$. Furthermore let $\sigma_{N}$ be the set of eigenvalues of $K_{N}$ within distance $\epsilon$ from a fixed $\lambda_{0}$, then the sum of multiplicities of $\lambda$ in $\sigma_{N}$ equals the multiplicity of $\lambda_{0}$.
\end{theorem}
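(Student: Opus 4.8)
The plan is to prove this through the standard machinery of collectively compact operator approximation theory (Anselone; Atkinson \cite{atkinspec}), whose engine is the upgrade of pointwise convergence into genuine \emph{norm} convergence of certain operator products. Since $\{K_N\}$ is collectively compact and $K_N\to K$ pointwise, the limit $K$ is itself compact (its image of the unit ball lies in the closure of the relatively compact set $\{K_N x:\|x\|\le1,\ N\ge1\}$), so the part of its spectrum with $|\lambda|\ge R>0$ consists of finitely many eigenvalues of finite algebraic multiplicity. Both assertions will be reduced to a single \emph{uniform resolvent bound}: for every compact set $\Lambda$ lying in the resolvent set of $K$ and bounded away from $0$, the resolvents $(\mu-K_N)^{-1}$ exist and are uniformly bounded for $\mu\in\Lambda$ and $N$ large.

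First I would establish the two fundamental estimates $\|(K_N-K)K\|\to0$ and $\|(K_N-K)K_N\|\to0$. The point is that the pointwise convergence $K_N\to K$ is in fact uniform on any relatively compact set (an Arzel\`a--Ascoli/equicontinuity consequence of collective compactness), whereas $K$ and the family $\{K_N\}$ each map the unit ball into a relatively compact set; taking the supremum over $Kx$, respectively $K_Nx$, with $\|x\|\le1$ then yields the two norm estimates. Next I would derive the uniform resolvent bound. Writing $S_N:=K_N-K$ and $B:=(I-\mu^{-1}K)^{-1}$, the factorization $\mu-K_N=(\mu-K)(I-\mu^{-1}BS_N)$ reduces matters to inverting $W_N:=I-\mu^{-1}BS_N$. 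Since $\|S_N\|$ is bounded, $I+\mu^{-1}BS_N$ is a bounded approximate inverse and $W_N(I+\mu^{-1}BS_N)=I-\mu^{-2}(BS_N)^2$; the corrector is small in norm because $S_NBS_N=(K_N-K)B(K_N-K)$ while $BK$ and $\{BK_N\}$ have relatively compact range, so the fundamental estimates apply. A Neumann series then makes $W_N$ invertible with $\|W_N^{-1}\|$ bounded uniformly for $\mu\in\Lambda$ and large $N$.

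The two statements follow. For the clustering claim, delete $\epsilon$-balls about the finitely many eigenvalues of $K$ with $|\lambda_0|\ge R$; the set $\{|\lambda|\ge R\}$ minus these balls contains no eigenvalue of $K$, hence is a compact subset of the resolvent set, so by the uniform bound $K_N$ has no eigenvalue there once $N\ge N_0$, which is exactly the assertion. For the multiplicity claim, fix $\lambda_0$ and a small circle $\Gamma$ isolating it within the spectrum of $K$, and introduce the Riesz projections $P=\frac{1}{2\pi i}\oint_\Gamma(\mu-K)^{-1}\,d\mu$ and $P_N=\frac{1}{2\pi i}\oint_\Gamma(\mu-K_N)^{-1}\,d\mu$, whose ranks are the algebraic multiplicity of $\lambda_0$ and the total multiplicity of the eigenvalues of $K_N$ inside $\Gamma$. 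Using the resolvent difference $(\mu-K_N)^{-1}-(\mu-K)^{-1}=\mu^{-2}W_N^{-1}BS_NB$ together with the fact that $BP$ is finite rank (so $S_N\,BP\to0$ in norm), one obtains $\|(P_N-P)P\|\to0$; a companion estimate $\|(P_N-P)P_N\|\to0$ follows after noting that $\Gamma$ encloses no singularity at $0$, so $P_N=K_NG_N$ for a uniformly bounded $G_N$, whence $\{BP_Nx\}$ is uniformly relatively compact by collective compactness. The two one-sided bounds make $P_N|_{\operatorname{Ran}P}$ and $P|_{\operatorname{Ran}P_N}$ injective for large $N$, forcing $\operatorname{rank}P_N=\operatorname{rank}P$.

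The main obstacle is precisely that $K_N\to K$ holds only pointwise, so no direct norm-perturbation of resolvents is available; the entire argument hinges on converting this into norm convergence of the products $(K_N-K)K$ and $(K_N-K)K_N$ via collective compactness, and then, for the finer multiplicity statement, on controlling the projections through the one-sided estimates $\|(P_N-P)P\|\to0$ and $\|(P_N-P)P_N\|\to0$ rather than through norm convergence $P_N\to P$, which need not hold. Getting the second one-sided estimate—via the factorization $P_N=K_NG_N$ that reinstates a factor of $K_N$ to which collective compactness can be applied—is the subtle point of the proof.
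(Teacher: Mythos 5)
The paper does not prove this statement at all: it is quoted verbatim as a known result, with the proof delegated to the citation \cite{atkinspec} (Atkinson's spectral approximation theory for collectively compact operators). Your proposal is a correct and essentially complete reconstruction of exactly that standard Anselone--Atkinson argument: the two fundamental norm estimates $\|(K_N-K)K\|\to0$ and $\|(K_N-K)K_N\|\to0$, the uniform resolvent bound via the factorization $\mu-K_N=(\mu-K)\bigl(I-\mu^{-1}B S_N\bigr)$ with the Neumann-series correction $I-\mu^{-2}(BS_N)^2$, and the rank comparison of Riesz projections through the one-sided estimates $\|(P_N-P)P\|\to0$ and $\|(P_N-P)P_N\|\to0$, where you correctly identify and resolve the genuinely subtle step --- the factorization $P_N=K_N G_N$ (valid because $0$ lies outside the contour) that reinstates a factor of $K_N$ so collective compactness can be applied. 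The only blemish is cosmetic: the set $\{|\mu|\ge R\}$ minus the deleted $\epsilon$-balls is closed but unbounded, hence not compact as you assert; this is repaired by intersecting with a disk of radius exceeding $\sup_N\|K_N\|$ (finite by collective compactness), outside of which no $K_N$ has spectrum.
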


To summarize, outside of an arbitrarily small disk eigenvalues of $L_{n}$ approximate the eigenvalues of $L$ with correct multiplicities. We also have a guarantee that the convergence rate of $L_{n}$ is carried over to the eigenvalues.

\begin{theorem}\cite{atkinrate}
\label{Atkinson2}
With the same assumption as in the above theorem, let $\lambda_{0}$ be of index $\nu$, i.e., $\nu$ is the smallest integer for which
\begin{equation}
\ker( (\lambda_{0}-K)^{\nu} )= \ker((\lambda_{0}-K)^{\nu+1}),
\end{equation}
where ker means the kernel space.
Then for some $c >0$ and all sufficiently large $n$, 
\begin{equation}
\vert \lambda - \lambda_{0} \vert \leq c \max \{ \Vert K \varphi_{i} -K_{n} \varphi_{i} \Vert ^{\frac{1}{\nu}} \vert 1 \leq i \leq m\}
\end{equation}
for all $\lambda \in \sigma_{n}$, and the set $\{ \varphi_{1},...\varphi_{m}\}$ is a basis for $\ker ((\lambda_{0}-K)^{\nu})$.
\end{theorem}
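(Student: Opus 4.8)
The plan is to reduce the approximate eigenvalue problem, localized near $\lambda_0$, to a finite-dimensional matrix perturbation problem on the generalized eigenspace, and to show that the exponent $1/\nu$ is precisely the rate at which eigenvalues split off a nilpotent Jordan structure of index $\nu$ under an $O(\epsilon_n)$ perturbation. Throughout write $\epsilon_n := \max_{1\le i\le m}\|(K-K_n)\varphi_i\|$ for the quantity on the right-hand side of the claimed bound.

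First I would set up Riesz spectral projections. Choose a circle $\Gamma_0$ in the complex plane enclosing $\lambda_0$ and no other point of the spectrum of $K$, and define
\[ E = \frac{1}{2\pi i}\oint_{\Gamma_0}(z-K)^{-1}\,dz, \qquad E_n = \frac{1}{2\pi i}\oint_{\Gamma_0}(z-K_n)^{-1}\,dz. \]
The range of $E$ is the generalized eigenspace $\ker((\lambda_0-K)^\nu)$, with basis $\{\varphi_1,\dots,\varphi_m\}$. The collective compactness of $\{K_n\}$ with pointwise convergence gives $\|(K-K_n)K_n\|\to 0$ and resolvent convergence uniformly on $\Gamma_0$ (the same Anselone theory invoked for Theorem~\ref{Atkinson1}); hence for large $n$ each $(z-K_n)^{-1}$ exists and is uniformly bounded on $\Gamma_0$, so $E_n$ is well defined, uniformly bounded, and $E_n\to E$ strongly, while $\Gamma_0$ encloses exactly the eigenvalues in $\sigma_n$ among the spectrum of $K_n$. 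By Theorem~\ref{Atkinson1}, $\operatorname{rank}E_n=\operatorname{rank}E=m$. Setting $\mathcal M_n := \operatorname{range}E_n$, a $K_n$-invariant subspace, the eigenvalues in $\sigma_n$ are precisely those of $K_n$ restricted to the $m$-dimensional space $\mathcal M_n$.

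Next I would take $\psi_i := E_n\varphi_i$ as a basis of $\mathcal M_n$ (a basis for large $n$, since $\psi_i\to E\varphi_i=\varphi_i$ and the latter are independent) and compute the matrix of $(\lambda_0-K_n)|_{\mathcal M_n}$ in it. Using $E_n K_n = K_n E_n$ and splitting $(\lambda_0-K_n)\varphi_i = (\lambda_0-K)\varphi_i + (K-K_n)\varphi_i$, the crucial identity is
\[ (\lambda_0-K_n)\psi_i = E_n(\lambda_0-K)\varphi_i + E_n(K-K_n)\varphi_i = \sum_{j} n_{ji}\,\psi_j + E_n(K-K_n)\varphi_i, \]
where $(n_{ji})$ is the matrix, in the basis $\{\varphi_j\}$, of the nilpotent operator $\mathcal N := (\lambda_0-K)|_{\operatorname{range}E}$; here I use that $(\lambda_0-K)\varphi_i$ again lies in $\operatorname{range}E$, so $E_n$ carries it to $\sum_j n_{ji}\psi_j$. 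Thus in the $\{\psi_i\}$ basis, $(\lambda_0-K_n)|_{\mathcal M_n}$ is represented by $B_n = \mathcal N + P_n$, with the nilpotent part reproduced \emph{exactly} and $P_n$ the coordinate representation of $\varphi_i\mapsto E_n(K-K_n)\varphi_i$. Since $E_n$ is uniformly bounded and the change of basis from $\{\psi_j\}$ is uniformly well conditioned, $\|P_n\|\le c\,\epsilon_n$. Crucially, $\mathcal N$ is nilpotent of index exactly $\nu$, because $\operatorname{range}E=\ker((\lambda_0-K)^\nu)$.

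It remains to invoke the elementary matrix fact that the eigenvalues of $\mathcal N + P_n$, with $\mathcal N$ nilpotent of index exactly $\nu$, lie within $O(\|P_n\|^{1/\nu})$ of $0$. Indeed, if $\mu\neq 0$ is such an eigenvalue with unit eigenvector $v$, then $v = (\mu-\mathcal N)^{-1}P_n v$, and since $\mathcal N^\nu=0$ the resolvent expands as $(\mu-\mathcal N)^{-1}=\sum_{k=0}^{\nu-1}\mu^{-k-1}\mathcal N^k$, whose norm is $\le C|\mu|^{-\nu}$ for $|\mu|$ small; taking norms yields $|\mu|^\nu\le C\|P_n\|$. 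Since the eigenvalues of $B_n$ are exactly $\{\lambda_0-\lambda:\lambda\in\sigma_n\}$ and $\|P_n\|\le c\,\epsilon_n$, this gives $|\lambda-\lambda_0|\le c\,\epsilon_n^{1/\nu}$ for all $\lambda\in\sigma_n$, as claimed. The main obstacle is the first step: one must carefully deploy the collectively compact convergence theory so that the contour projections $E_n$ converge to $E$ with preserved rank $m$ and remain uniformly bounded for all large $n$ — this uniform bound is exactly what furnishes a single constant $c$ valid asymptotically — after which the nilpotent part of $B_n$ is reproduced exactly, and only the clean $O(\epsilon_n)$ estimate on $P_n$ and the resolvent-expansion lemma above are needed.
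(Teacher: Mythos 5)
The paper gives no proof of this theorem at all---it is quoted from Atkinson \cite{atkinrate} and used as a black box---so there is no internal argument to compare against; your proposal must stand on its own, and it does. The reduction is sound: Anselone's collectively compact convergence theory (the same machinery underlying Theorem~\ref{Atkinson1}) does give uniform invertibility and boundedness of $(z-K_n)^{-1}$ on a fixed contour around $\lambda_0$ for large $n$, hence well-defined, uniformly bounded projections $E_n$ with $E_n\varphi_i\to E\varphi_i=\varphi_i$ and $\operatorname{rank}E_n=\operatorname{rank}E=m$. Your two key observations are correct and carry the proof: (i) since $\operatorname{range}E=\ker((\lambda_0-K)^\nu)$ is invariant under $\lambda_0-K$, the vector $(\lambda_0-K)\varphi_i$ is a combination of the $\varphi_j$, so $E_n$ transports it to the same combination of the $\psi_j=E_n\varphi_j$; thus in the $\{\psi_j\}$ basis the matrix of $(\lambda_0-K_n)|_{\mathcal M_n}$ is \emph{exactly} the nilpotent $\mathcal N$ plus a perturbation $P_n$ with $\|P_n\|\le c\,\epsilon_n$ (using uniform boundedness of $E_n$ and the uniformly conditioned coordinate maps, which follow from $\psi_j\to\varphi_j$ in norm and finite dimensionality); and (ii) the resolvent-expansion lemma, $1\le\|(\mu-\mathcal N)^{-1}\|\,\|P_n\|\le C|\mu|^{-\nu}\|P_n\|$ for $|\mu|$ bounded, which cleanly extracts the exponent $1/\nu$ (note only $\mathcal N^\nu=0$ is needed, not that the index is exactly $\nu$). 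This is essentially the classical Atkinson/Osborn route---reduce to the $m$-dimensional invariant subspace and let the nilpotency of index $\nu$ produce the $1/\nu$ rate---organized in a particularly transparent way, since concentrating the entire discretization error into the single matrix $P_n$ avoids the telescoping commutator estimates of the original papers. Two small points deserve a line each in a polished write-up: the contour radius must be coordinated with the $\epsilon$ in the definition of $\sigma_n$ (Theorem~\ref{Atkinson1} guarantees that for large $n$ no eigenvalues of $K_n$ linger between the contour and the $\epsilon$-disk, so the spectrum of $K_n|_{\mathcal M_n}$ is exactly $\sigma_n$), and the hypothesis $\lambda_0\neq 0$ is silently used when identifying $\operatorname{range}E$ with $\ker((\lambda_0-K)^\nu)$, which requires finite ascent and descent, i.e.\ the Riesz theory for the compact operator $K$ at a nonzero spectral point.
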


When $\lambda_{0} =1$, then index is also called the Riesz number \cite[p.11]{CK83} of $K$. The Riesz number of our $L$ is 1 \cite[p.84]{CK83}.  We can now
prove the main theorem that the determinant of $I-M_{N}{(\kappa)}$ at an eigenfrequency $\kappa=\kappa_{j}$ vanishes exponentially with $N$.

\begin{theorem} 
Let $\kappa_{j}^{2}$ be a
Dirichlet eigenvalue of a bounded domain $\Omega$ with analytic boundary. Then
there exists an $N_{0}$ such that
\be
\vert f_N(\kappa_{j}) \vert \leq Ce^{-\alpha N}
\qquad \mbox{for all } N>N_0,
\ee
where $C$ and $\alpha>0$ are constants depending on $\Omega$ and $\kappa_{j}$.
\label{t:main}
\end{theorem}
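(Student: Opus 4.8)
The plan is to estimate $|f_N(\kappa_j)|$ by comparing the eigenvalues of the Nystr\"om matrix $M_N$ to those of $L$ via the two Atkinson theorems, and then exploiting the product structure of the Fredholm determinant. By Lemma~\ref{fnm}, the nonzero eigenvalues of $M_N$ coincide with those of $L_N$, so $f_N(\kappa_j) = \prod_{\lambda \neq 0}(1-\lambda)$, the product taken over nonzero eigenvalues of $L_N$ counting multiplicity. The central observation is that at $\kappa = \kappa_j$, by Theorem~\ref{kress} and Corollary~\ref{iffL} the operator $L(\kappa_j)$ has the eigenvalue $\lambda_0 = 1$ with some finite multiplicity $m$, and this is the unique eigenvalue that makes the exact determinant $f(\kappa_j) = \det(I - L(\kappa_j))$ vanish. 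Thus the whole quantity $f_N(\kappa_j)$ is driven to zero by the cluster of eigenvalues of $L_N$ that approximate this special $\lambda_0 = 1$; all other factors stay bounded away from zero.

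First I would fix a small radius $r_0$ (as in Figure~\ref{fig: proof}) so that the disk of radius $r_0$ about $\lambda_0 = 1$ contains no other eigenvalue of $L(\kappa_j)$, and choose a threshold $R$ with $0 < R < 1 - r_0$ separating the eigenvalue cluster near $1$ from the origin. By Theorem~\ref{Atkinson1}, for $N$ large enough the eigenvalues of $L_N$ with modulus $\geq R$ split into (i) a group of total multiplicity $m$ lying within $\epsilon$ of $\lambda_0 = 1$, and (ii) groups clustering near the remaining nonzero eigenvalues of $L$, none of which equal $1$. I would then write
\begin{equation}
f_N(\kappa_j) \;=\; \Bigl[\prod_{\lambda \in \sigma_N}(1-\lambda)\Bigr]\cdot\Bigl[\prod_{|\lambda|\geq R,\,\lambda \notin \sigma_N}(1-\lambda)\Bigr]\cdot\Bigl[\prod_{0<|\lambda|<R}(1-\lambda)\Bigr],
\end{equation}
where $\sigma_N$ is the cluster approximating $\lambda_0 = 1$. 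The second and third bracketed factors converge to the corresponding factors of $f(\kappa_j)$ with all $(1-\lambda)$ bounded away from $0$, so their product is bounded above by a constant uniformly in $N$; the analyticity and trace-class property (Lemma~\ref{l:tr}) guarantee the infinite tail product over small eigenvalues converges and stays controlled.

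The decisive factor is the first bracket. Since the Riesz number (index) of $L$ at $\lambda_0 = 1$ is $\nu = 1$ (quoted from \cite[p.84]{CK83}), Theorem~\ref{Atkinson2} gives the clean bound $|\lambda - 1| \leq c\,\max_i \|L\varphi_i - L_N\varphi_i\|$ for each $\lambda \in \sigma_N$, with no fractional root. The eigenfunctions $\varphi_i$ spanning $\ker(I - L(\kappa_j))$ are analytic by Lemma~\ref{l:anal}, so the exponential interpolation estimate $\|L_N\psi - L\psi\|_\infty \leq Ce^{-aN}$ for analytic $\psi$ applies to each of them. Hence each of the $m$ factors satisfies $|1-\lambda| \leq c\,C e^{-aN}$, and
\begin{equation}
\Bigl|\prod_{\lambda \in \sigma_N}(1-\lambda)\Bigr| \;\leq\; (cC)^m e^{-maN},
\end{equation}
which with $\alpha := ma > 0$ yields the claimed bound $|f_N(\kappa_j)| \leq C' e^{-\alpha N}$.

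I expect the main obstacle to be bookkeeping the \emph{product over the remaining nonzero eigenvalues} rigorously: one must ensure the factors for $|\lambda| \geq R$ and the infinite tail for $0 < |\lambda| < R$ are jointly bounded uniformly in $N$. The tail requires a Lidskii-type or trace-norm argument—controlling $\prod(1 - \lambda)$ over infinitely many small $L_N$-eigenvalues—using that $L_N$ is a finite-rank operator whose trace-class norm is uniformly bounded as $N \to \infty$, so that $\log|\det(I-L_N)|$ restricted to small eigenvalues is dominated by $\sum|\lambda|$, which converges uniformly. A careful invocation of the trace-class convergence of $L_N \to L$ (or direct use of Bornemann's Nystr\"om determinant convergence framework \cite{bornemann}) closes this gap; the eigenvalue-cluster estimate itself is the easy and conceptually clean part.
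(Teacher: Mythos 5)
Your overall architecture is the paper's own: the same three-factor splitting of $f_N(\kappa_j)=\prod(1-\lambda_i^{(N)})$ via Lemma~\ref{fnm}, Theorem~\ref{Atkinson1} to organize the eigenvalues of $L_N$ above a modulus threshold, and Theorem~\ref{Atkinson2} with Riesz number $\nu=1$ together with analyticity of the null vectors (Lemma~\ref{l:anal}) to make the cluster factors near $\lambda_0=1$ decay like $e^{-aN}$. The genuine gap is exactly the point you flag at the end, and neither of your proposed repairs closes it. Atkinson's theorems say nothing about the eigenvalues of $L_N$ below the chosen threshold, and the matrix $M_N$ can have up to $N$ of them; hence your third bracket is a product of up to $N$ terms, and a priori it can grow exponentially in $N$ --- it is \emph{not} ``bounded above by a constant uniformly in $N$.'' Lemma~\ref{l:tr} controls $\sum_i|\lambda_i(L)|$, i.e.\ the trace norm of the \emph{limit} operator, and this does not transfer to a uniform bound on $\sum_i|\lambda_i^{(N)}|$ for the finite-rank approximations; such a uniform trace-norm bound for Nystr\"om/product-quadrature operators is precisely the delicate statement the paper never proves (and avoids needing). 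The Bornemann fallback also cannot rescue the claimed rate: the kernel $L(t,s)$ is only finitely smooth (its leading non-analytic term is $O((t-s)^2\log|t-s|)$), so determinant-convergence frameworks yield only algebraic convergence of $f_N$ to $f$ --- consistent with the paper's own observation that away from eigenfrequencies the convergence is merely algebraic. That route would prove $f_N(\kappa_j)\to 0$, but not at an exponential rate, which is the whole content of Theorem~\ref{t:main}.

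The paper's resolution, which is the one idea missing from your argument, is to abandon uniform boundedness of the small-eigenvalue product and instead prove a bound growing at an \emph{arbitrarily small} exponential rate, then absorb it into the cluster decay. Fix $\epsilon\in(0,r_0)$ with $r_0\le\tfrac12$, and split the sub-threshold eigenvalues of $L_N$ at modulus $\epsilon$: those with $\epsilon\le|\lambda_i^{(N)}|<r_0$ are, by Theorem~\ref{Atkinson1} applied with $R=\epsilon$, within $\epsilon$ of true eigenvalues of $L$, so their moduli sum to at most $\Vert L(\kappa_j)\Vert_{\mathcal{J}_1}$ plus an $O(N\epsilon)$ correction; those with $|\lambda_i^{(N)}|<\epsilon$ number at most $N$ and contribute at most $N\epsilon$. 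Using $|1-\lambda|\le e^{2|\lambda|}$ for $|\lambda|\le\tfrac12$, the whole tail product is bounded by $\exp\bigl(2N\epsilon+2\Vert L(\kappa_j)\Vert_{\mathcal{J}_1}\bigr)$, and combining with the cluster factor $\le C_2e^{-aN}$ and the bounded finite middle factor gives $|f_N(\kappa_j)|\le Ce^{-(a-2\epsilon)N}$; choosing $\epsilon<a/2$ finishes the proof. Two smaller points: your appeal to Theorem~\ref{kress} restricts you to simply-connected domains, whereas the theorem (see the remark following it) covers general connectivity --- you should cite Lemma~\ref{l:nullspace} instead, which gives the needed direction for any bounded domain; and once the $e^{2N\epsilon}$ loss is accounted for, your claimed rate $\alpha=ma$ must be weakened to $\alpha<ma-2\epsilon$ (the paper, using only one cluster factor, states $\alpha<a-2\epsilon$).
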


\remark{This theorem includes the case of $\Omega$ non-simply connected,
although later we will show that a modification to the
definition of $f(\kappa)$ and $f_N(\kappa)$ is needed to
make a robust method for this case.}

\begin{proof}
Let $\{ \lambda_{i}^{(N)}\}_{i=1}^{N'}$
be the set of nonzero eigenvalues of $L_N(\kappa_j)$, counting multiplicities,
where $N'$ is at most $N$.
Let $\{\lambda_{i}\}$ be nonzero eigenvalues of $L{(\kappa_{j})}$.
If $\kappa_{j}$ is an eigenfrequency for \eqref{helm}--\eqref{bc},
then according to Lemma~\ref{l:nullspace}, $I - L{(\kappa_{j})}$ has
nontrivial kernel. Based on Corollary~\ref{iffL}, $1$ is an eigenvalue of $L$, which we can label
$\lambda_{1}=1$.
Theorem \ref{Atkinson1} implies that we can pick an ordering of $
\lambda_{i}^{(N)}$ so that $\{\lambda_{1}^{(N)}\}$ converges to
$\lambda_{1}$ as $N \rightarrow \infty$, and there might be multiple
such sequences, depending on the multiplicity of $\lambda_{1}$, i.e.,
essentially, the number of sequences with $\lambda_{1}$ as the limit
is the same as the multiplicity of $\lambda_{1}$. We only need the
existence of one such sequence for the following proof to hold.
Theorem \ref{Atkinson1} also implies that if we let $r_{0}$ be
a constant with $r_{0} \leq \frac{1}{2}$, then there exists $N_{1}$
such that for $N \geq N_{1}$, the number of $\lambda_{i}^{(N)}$ with
$\vert \lambda_{i}^{(N)} \vert \geq r_{0}$ equals the number of
$\lambda_{i}$ with $\vert \lambda_{i} \vert \geq r_{0}$, and we can
relabel $\{\lambda_{i}^{(N)}\}_{i=1}^{N'}$ in such a way that $\displaystyle
\lim_{N \rightarrow \infty} \lambda_{i}^{(N)} = \lambda_{i}$ for all
$i$ with $\lambda_{i}^{(N_{1})} \geq r_{0}$.

Since by Lemma \ref{fnm}, $\{ \lambda_{i}^{(N)} \vert 1\leq i \leq N' \}$
is also the set of
nonzero eigenvalues of $M_{N}{(\kappa_{j})}$, we write the matrix determinant of $I - M_{N}{(\kappa_{j})}$ as a product of three factors as follows,
\be
\det(I - M_{N}{(\kappa_{j})})  \;=\; \prod _{i=1}^{N'} (1 - \lambda_{i}^{(N)})
                         \;=\; (1 - \lambda_{1}^{(N)}) \prod_{\vert \lambda^{(N)}_{i} \vert \geq r_{0} \mbox{, } i \neq 1} (1 - \lambda_{i}^{(N)}) \prod_{\vert \lambda^{(N)}_{i} \vert < r_{0}} (1 - \lambda_{i}^{(N)})
\ee
Then, for the second factor, since there are a finite number of terms,
\be
\lim_{N\rightarrow \infty} \prod_{\vert \lambda^{(N)}_{i} \vert \geq r_{0} \mbox{, } i \neq 1} (1 - \lambda_{i}^{(N)}) 
\; = \;
\prod_{\vert \lambda^{(N)}_{i} \vert \geq r_{0} \mbox{, } i \neq 1} \lim_{N \rightarrow \infty} (1 - \lambda_{i}^{(N)})
 \; = \;
\prod_{\vert \lambda_{i} \vert \geq r_{0} \mbox{, } i \neq 1} (1 - \lambda_{i})
\ee
Thus there exists $N_{2}$ and constant $C_{1}$ such that for $N \geq N_{2}$, 
\begin{equation}
\biggl\vert \prod_{\vert \lambda^{(N)}_{i} \vert \geq r_{0} \mbox{, } i \neq 1} (1 - \lambda_{i}^{(N)} ) \biggr\vert  \leq C_{1}
\nonumber
\end{equation}

For the third factor 
\be
\prod_{\vert \lambda^{(N)}_{i} \vert < r_{0}} \vert1 - \lambda_{i}^{(N)} \vert
\;=\;
\exp (\sum_{\vert \lambda^{(N)}_{i} \vert < r_{0}}  \log \vert1 - \lambda_{i}^{(N)} \vert )
\; \le \;
  \exp (\sum_{\vert \lambda^{(N)}_{i} \vert < r_{0}}  2 \vert  \lambda_{i}^{(N)} \vert )
\ee
Choose $\epsilon \in (0,r_0)$, and
let $m_{N}(\epsilon)$, $m_{N}(r_{0})$ be the number of $\lambda^{(N)}_i$ with $\vert \lambda^{(N)}_i \vert \geq \epsilon$,
$\vert \lambda^{(N)}_i \vert \geq r_{0}$, respectively; see Fig.~\ref{fig: proof}.
In Theorem~\ref{Atkinson1}, pick $R = \epsilon$, then there exists $N_{3}$ such that for $N \geq N_{3}$, all $ \lambda^{(N)}_{i} $ with $\vert \lambda_{i}^{(N)} \vert \geq \epsilon$ are within distance $\epsilon$ of some $\lambda_{i}$,
and each $\lambda_{i}$ with $\vert \lambda_{i} \vert \geq \epsilon$ has exactly one sequence $\{ \lambda_{i}^{(N)} \}$ approaching it, i.e., we have $|\lambda_i^{(N)}-\lambda_i|<\epsilon$ and $\vert \lambda_{i}^{(N)} \vert \geq \epsilon$ for $N \geq N_{3}$.
Then for $N\ge N_3$, we bound
\begin{align*}
\sum_{\vert \lambda^{(N)}_{i} \vert < r_{0}}  \vert  \lambda_{i}^{(N)} \vert & = \sum_{\vert \lambda^{(N)}_{i} \vert < \epsilon}  \vert  \lambda_{i}^{(N)} \vert + \sum_{\epsilon \leq \vert \lambda^{(N)}_{i} \vert < r_{0}}  \vert  \lambda_{i}^{(N)} \vert \\
                                            & \leq (N' - m_{N}(\epsilon)  )\epsilon + \sum_{\epsilon \leq \vert \lambda_{i} \vert < r_{0}}  \vert  \lambda_{i} \vert + (m_{N}(\epsilon) -m_{N}(r_{0}))\epsilon  \\
                                            &  = ( N'-  m_{N}(r_{0})) \epsilon + \sum_{\epsilon \leq \vert \lambda_{i} \vert < r_{0}}  \vert  \lambda_{i} \vert  \\
                                            &  \leq N' \epsilon + \sum_{\epsilon \leq \vert \lambda_{i} \vert < r_{0}}  \vert  \lambda_{i} \vert
\end{align*}
where
\begin{equation}
 \sum_{\epsilon \leq \vert \lambda_{i} \vert < r_{0}}  \vert  \lambda_{i} \vert  \leq \sum_{i} \vert \lambda_{i} \vert \leq  \Vert L{(\kappa_{j})} \Vert _{\mathcal{J}_{1}}
\nonumber
\end{equation}
which is bounded since by Lemma~\ref{l:tr} $L$ is in trace class.

For $\varphi \in \ker(I-L)$, from Lemma~\ref{l:anal}, $\varphi$ is analytic thus,
as discussed in Section~\ref{s:quad},
our quadrature scheme has $\Vert L_{N} \varphi - L \varphi \Vert _{\infty} \leq Ce^{-a_{0}N}$ for $N$ sufficiently large, where $a_{0}>0$ and $C$ are constants which only depend on $\varphi$. $\ker(I-L)$ is finite dimensional so by theorem \ref{Atkinson2}, so there exists $N_{4}$, $a >0$ and $C_{2}$ such that for $N \geq N_{4}$, $\vert 1-\lambda_{1}^{(N)}\vert \leq C_{2} e^{-a N}$,

Let $N_{0}= \max \{N_{1}, N_{2}, N_{3}, N_{4}\}$ then for $N \geq N_{0}$,
since $N'\le N$,
\be
\vert \det(I - M_{N}{(\kappa_{j})}) \vert 
\;\leq\;
C_{2} e^{-aN} C_{1} \exp (2N\epsilon +2  \Vert L{(\kappa_{j})} \Vert _{\mathcal{J}_{1}}) 
\ee
Now let $C := C_{1}C_{2}\exp(2 \Vert L{(\kappa_{j})} \Vert _{\mathcal{J}_{1}})$,
then
$\vert \det(I - M_{N}^{(\kappa_{j})}) \vert  \leq C e^{-(\alpha-2 \epsilon)N}$,
so we may choose any positive $\alpha<a - 2\epsilon$ to finish the proof.
\end{proof}

\remark{From the above proof, it is clear that when $\ker(I-L)$ is one-dimensional the rate $\alpha$
may be chosen arbitrarily close to $a$, the width of the
strip in which the null-vector $\varphi$ (density generating the
eigenfunction) is analytic.
Similar result holds for  $\ker(I-L)$ higher-dimensional. }

\remark{When the boundary $\Gamma$ is merely $C^\infty$ smooth
(not necessarily analytic), we expect that
$\ker(I-L)$ is in $C^{\infty}[0, 2\pi]$, and that the
determinant converges to zero super-algebraically at eigenfrequencies.
We leave a proof of this to future work.}

\section{Boyd's method for finding roots of the determinant}
\label{s:boyd}

Here we describe a new approach to finding eigenvalues efficiently,
using Theorem \ref{invTr} to equate these with the roots of the
Fredhold determinant $f(\kappa)$.
Our method is inspired by the following fact.
\begin{lemma}
$f(\kappa) = \det(I-L{(\kappa)})$ is analytic with respect to $\kappa$
for $\kappa \in \mathbb{C} \backslash \{0\}$.
\end{lemma}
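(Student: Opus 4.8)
The plan is to write $f$ as a composition of two holomorphic maps and invoke the chain rule for Banach-space-valued functions. The first ingredient is classical: the Fredholm determinant functional $A \mapsto \det(I-A)$ is entire, in the Fr\'echet sense, on the Banach space $\mathcal{J}_{1}(\mathcal{H})$ of trace-class operators (see Simon \cite{simon} or Gohberg--Krein \cite{GK69}); in particular it is Lipschitz on bounded sets, obeying $|\det(I-A)-\det(I-B)| \le \|A-B\|_{\mathcal{J}_{1}}\exp(1+\|A\|_{\mathcal{J}_{1}}+\|B\|_{\mathcal{J}_{1}})$. Granting this, it suffices to prove that the operator-valued map $\kappa \mapsto L(\kappa)$ is holomorphic from $\mathbb{C}\backslash\{0\}$ into $\mathcal{J}_{1}(L_{2}[0,2\pi])$, since $f(\kappa)=\det(I-L(\kappa))$ is then a composition of holomorphic maps.

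To establish holomorphy of $\kappa \mapsto L(\kappa)$ I would exhibit its complex derivative directly in the trace norm. Away from $\kappa=0$ the Hankel functions $H_{0}^{(1)}$ and $H_{1}^{(1)}$ are analytic in their argument, so for each fixed $(t,s)$ with $t\neq s$ the kernel \eqref{L} is analytic in $\kappa$; the candidate derivative is the operator $L'(\kappa)$ with kernel $\partial_{\kappa}L(t,s;\kappa)$. The point is that $\partial_{\kappa}$ preserves the singularity structure exploited in Lemma~\ref{l:tr}. Differentiating the splitting $L=L^{(1)}\ln(4\sin^{2}\tfrac{t-s}{2})+L^{(2)}$ in $\kappa$ commutes with the $\kappa$-independent logarithmic prefactor, and $L^{(1)}$, $L^{(2)}$ remain analytic in $(t,s)$ and in $\kappa\neq0$, so the leading non-analytic term of $\partial_{\kappa}L$ on the diagonal is again $O((t-s)^{2}\log|t-s|)$. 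Hence $\partial_{\kappa}L$ and $\partial_{s}\partial_{\kappa}L$ are continuous on $[0,2\pi]^{2}$, and $L'(\kappa)$ is itself trace class by the same criterion of Bornemann \cite{bornemann} used to prove Lemma~\ref{l:tr}.

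It then remains to show the difference quotients converge in trace norm. I would use the quantitative form of that criterion, namely that the trace norm of an integral operator is bounded by $C(\|K\|_{\infty}+\|\partial_{s}K\|_{\infty})$ in terms of its kernel $K$. Applying this to the kernel of $h^{-1}(L(\kappa+h)-L(\kappa))-L'(\kappa)$, and using joint smoothness of $L(t,s;\kappa)$ in $(\kappa,t,s)$ for $\kappa\neq0$ together with compactness of $[0,2\pi]^{2}$, this kernel and its $\partial_{s}$-derivative tend to zero uniformly as $h\to0$. Thus the difference quotient converges in trace norm to $L'(\kappa)$, which proves $\kappa\mapsto L(\kappa)$ is holomorphic into $\mathcal{J}_{1}$, and composing with the entire determinant functional completes the argument. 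As an alternative to the explicit derivative, one may instead verify weak holomorphy---that $\kappa\mapsto\operatorname{tr}(BL(\kappa))$ is holomorphic for every $B\in\mathcal{B}(L_{2}[0,2\pi])$, by Fubini and Morera applied to the $\kappa$-analytic kernel---together with local boundedness in trace norm, and then invoke the equivalence of weak and strong holomorphy for Banach-space-valued maps.

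The hard part is the uniform control of the diagonal singularity under $\kappa$-differentiation: one must confirm that $\partial_{\kappa}$ and the mixed derivative $\partial_{s}\partial_{\kappa}$ do not worsen the weak $O((t-s)^{2}\log|t-s|)$ singularity beyond what the trace-class criterion tolerates. This is exactly where the analytic splitting \eqref{L1}--\eqref{L2} earns its keep: because $L^{(1)}$ and $L^{(2)}$ are separately analytic in $\kappa$ on $\mathbb{C}\backslash\{0\}$ and the logarithm is carried passively by the $\kappa$-independent factor $\ln(4\sin^{2}\tfrac{t-s}{2})$, all $\kappa$-derivatives can be taken factor by factor, and the remaining estimates reduce to uniform bounds on Bessel and Hankel functions and their $\kappa$-derivatives on compact subsets of the argument bounded away from $0$, which are routine.
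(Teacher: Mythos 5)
Your proposal is correct in outline, but it takes a genuinely different route from the paper's proof. The paper works entirely at the scalar level: it invokes the classical Fredholm series $\det(I-L)=\sum_{m=0}^{\infty}\frac{(-1)^m}{m!}\int_{[0,2\pi]^m}\det\bigl(L(t_p,t_q)\bigr)_{p,q=1}^{m}\,dt_1\cdots dt_m$ from \cite{gohberg}, observes that each term is analytic in $\kappa$ because the kernel is, bounds the tail via Hadamard's inequality by $\sum_{m\ge M}\frac{1}{m!}\,m^{m/2}(2\pi\Vert L\Vert_{L^{\infty}})^{m}$, and uses the entire function $\Phi(z)=\sum_{m}\frac{m^{(m+2)/2}}{m!}z^{m}$ of \cite{bornemann} to get locally uniform convergence; analyticity then follows from Weierstrass' theorem on locally uniform limits of analytic functions. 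This needs only sup-norm control of the kernel and no operator-valued holomorphy at all. You instead compose the entire functional $A\mapsto\det(I-A)$ on $\mathcal{J}_{1}$ (Simon \cite{simon}) with the map $\kappa\mapsto L(\kappa)$, which you must show is holomorphic in trace norm; that is where all your work lies, since it requires the quantitative trace-class bound $\Vert A\Vert_{\mathcal{J}_{1}}\le C(\Vert K\Vert_{\infty}+\Vert\partial_{s}K\Vert_{\infty})$ and uniform control of the diagonal singularity under $\kappa$-differentiation. What your route buys is a stronger structural fact---$\{L(\kappa)\}$ is a holomorphic trace-class-valued family, from which analyticity of the determinant, of traces of powers, and of similar functionals follows at once---and it works directly with the eigenvalue-product definition \eqref{fd1}, avoiding the identification (implicit in the paper) of that abstract determinant with the classical Fredholm series. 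What the paper's route buys is economy and self-containedness.

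Two points in your sketch need repair, though neither is fatal. First, you justify trace-norm convergence of the difference quotients by ``joint smoothness of $L(t,s;\kappa)$ in $(\kappa,t,s)$''; but $L$ is \emph{not} jointly smooth across the diagonal $t=s$. The estimate must be routed through the splitting \eqref{L1}--\eqref{L2}, as your final paragraph indicates: apply Cauchy estimates to the jointly analytic $L^{(1)}$ and $L^{(2)}$, and use that the $\kappa$-independent geometric factor $x_2'(s)[x_1(t)-x_1(s)]-x_1'(s)[x_2(t)-x_2(s)]=O((t-s)^2)$ is common to every $\kappa$-derivative and difference quotient of $L^{(1)}$, so that multiplication by $\ln\bigl(4\sin^2\frac{t-s}{2}\bigr)$, or by the $\cot\frac{t-s}{2}$ produced when $\partial_s$ hits that factor, preserves continuity and uniform smallness. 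Second, in your weak-holomorphy alternative, $\operatorname{tr}(BL(\kappa))$ for a general bounded $B$ is not simply an integral of the kernel, so ``Fubini and Morera'' do not apply verbatim; you should instead test against finite-rank $B$ (a norming set for $\mathcal{J}_{1}$) and combine with local boundedness in trace norm, or expand the trace in an orthonormal basis with uniform estimates.
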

\begin{proof}
For $L \in C[0, 2\pi]^{2}$, $\det(I-L) = \sum_{m=0}^{\infty}\frac{(-1)^{m}}{m!} \int_{0}^{2\pi}...\int_{0}^{2\pi} \det(L(t_{p},t_{q})_{p,q=1}^{m})dt_{1}...dt_{m}$ \cite[page 112]{gohberg}. $L(t_{p},t_{q})$ is analytic in $\kappa$ on $\mathbb{C} \backslash \{0\}$ by construction. Define $L_{m}:=\det(L(t_{p},t_{q})_{p,q=1}^{m})$ then $L_{m}$ is analytic in $\kappa$ on $\mathbb{C} \backslash \{0\}$. The idea is to show that $\det(I-L)$ is the uniform limit of the sequence of analytic functions $\{L_{m}\}$ on any compact set in
$\mathbb{C} \backslash \{0\}$. 
\begin{align*}
R_{M} &:=  \biggl|\sum_{m=M}^{\infty} \frac{(-1)^{m}}{m!} \int_{[0, 2\pi]^{m} }L_{m}(t_{1},....,t_{m})dt_{1}...dt_{m} \biggr| \\
            & \leq \sum_{m=M}^{\infty} \frac{1}{m!} (2\pi)^{m} \Vert L_{m} \Vert_{L^{\infty}} \\
            & \leq \sum_{m=M}^{\infty} \frac{1}{m!} m^{\frac{m}{2}} (2\pi \Vert L \Vert_{L^{\infty}})^{m}
 \end{align*}
The second inequality comes from Hadamard's Inequality.
As proved in \cite{bornemann}, the power series $\Phi(z) = \sum_{m=1}^{\infty} \frac{m^{(m+2)/2}}{m!}z^{m}$ defines an entire function on $\mathbb{C}$, together with the fact that $\Vert L \Vert_{L_{\infty}}$ is uniformly continuous in $\kappa$ on any compact set in $\mathbb{C} \backslash \{0\}$, we have $R_{M} \rightarrow 0$ as $M \rightarrow \infty$ locally uniformly in $\kappa$ on $\mathbb{C} \backslash \{0\}$. Thus $\det(I-L)$ is the locally uniformly convergent limit of a sequence of analytic functions in $\kappa$ on $\mathbb{C} \backslash \{0\}$. The claim follows.
\end{proof}

An analogous statement holds for our numerical approximation, namely
that $f_N(\kappa)$ is analytic in $\kappa$ close enough
to the positive real axis.
This follows from Lemma \eqref{fnm}, which says $f_{N}(\kappa) = \det(I-M_N(\kappa))$, an $N$-dimensional matrix determinant, and the fact that matrix entries are linear combinations of Hankel functions.
From Theorem~\ref{t:main}, $f_N$ vanishes exponentially fast at
each eigefrequency $\kappa_j$,
and thus, if we assume that the derivative $f'_N(\kappa_j)$ is bounded away from
zero for sufficiently large $N$,
the roots of $f_N$ approach the true eigenfrequencies with accuracy
exponential in $N$.

\remark{We do not prove that $f_N(\kappa)$ converges to $f(\kappa)$
exponentially for all $\kappa$;
indeed the numerical evidence (Section~\ref{s:num}) will be
that this convergence is merely algebraic for $\kappa$
away from eigenfrequencies.}

All that is now needed is an efficient method to find good approximations to the
real roots of the numerical Fredholm determinant $f_N(\kappa)$.
We propose Boyd's ``degree-doubling'' method \cite{boyd},
which, given that our function is analytic on the real axis,
is spectrally accurate in the number of function evaluations \cite{boyd}.
Thus just a few evaluations per root found will be enough
to approach machine accuracy.

Say we wish to find roots of $f_N$ in an interval $\kappa\in[a,b]$.
We change variable to
$\kappa(\theta) = \frac{b+a}{2} + \frac{b-a}{2} \cos \theta$,
choose a small number $M$, and evaluate the function on
a regular grid in $\theta$, i.e.\ $f_j = f_N(\kappa(\pi j/M))$,  $j=1,\ldots,2M$.
Note that only $M+1$ evaluations are needed since $\kappa(2\pi-\theta)
=\kappa(\theta)$.
Since $f_N(\kappa(\theta))$ is a $2\pi$-periodic function of $\theta$
analytic in a neighborhood of the real axis,
the Fourier representation
\be
f_N(\kappa(\theta)) \approx \sum_{m=-M}^M c_m e^{im\theta}
\label{fou}
\ee
is exponentially convergent in $M$.
(This is equivalent to a Chebyshev expansion in the variable $\kappa$.)
The coefficients $\{c_m\}$ are
computed via the fast Fourier transform of the vector $\{f_j\}$.
In practice we start with $M=4$, and double $M$, reusing previous
$f_j$ values, until $|c_M/c_0|\le 10^{-12}$.
Writing $z=e^{i\theta}$, \eqref{fou} is a Laurent expansion in $z$,
hence
$$
q(z) \;:=\;
z^M \!\! \sum_{m=-M}^M c_m z^m
$$
is a degree-$2M$ Taylor series with the same nonzero roots.
These roots are found by insertion of the vector $\{c_m\}$
into a companion matrix \cite{companion}
and finding its eigenvalues $\mu_i$ at a cost of $O(M^3)$
(although we note that evaluation of $f_j$ dominates over this
cost by far).
Finally, only the eigenvalues $\mu_i$ within $\epsilon$ of the unit
circle are kept; these are converted back to give the roots
$\kappa_i = \frac{b+a}{2} + \frac{b-a}{2} \re \mu_i$.
The imaginary parts
\be
\label{eq: betai}
\beta_i := \frac{b-a}{2} \im \mu_{i}
\ee
we observe are good indicators of of the size of errors in the roots.
This algorithm is available in \mpspack\ \cite{mpspack}
as \verb?@utils/intervalrootsboyd.m?

Finally, if the above criterion for Fourier series decay is not
met with $M=512$, or if it turns out that $|\beta_i|>\beta$, where $\beta$
is a fixed algorithm parameter,
then the interval $[a,b]$ is instead subdivided
and the process repeated on the smaller intervals.

\section{Numerical results for a simply-connected domain}
\label{s:num}
\subsection{Convergence of the Fredholm determinant}
To demonstrate the convergence of $f_{N}(\kappa)$ given by \eqref{fnk}
as a function of $N$, the number of quadrature nodes on $\Gamma$,
we use the non-symmetric planar domain described in Fig.~\ref{fig: ns}.
We test $\kappa$ values near the 100th eigenfrequency $\kappa_{100}$.
As the graph in Fig.~\ref{fig: ns} shows, for $\kappa=\kappa_{100}$,
convergence to zero is at least exponential.
However, as $\kappa$ moves away from the eigenfrequency,
the colorscale plot shows that the initial exponential
convergence deteriorates to much slower algebraic convergence.
We believe the latter is of third order,
although we do not have a proof of this.
(A possible explanation for third-order convergence is that it is
what a naive Nystr\"om method without Kress' analytic split would
give for the operator $I-2D$.)

\begin{figure}[!ht] 
(a) \raisebox{-1.6in}{\includegraphics[width=0.25\textwidth]{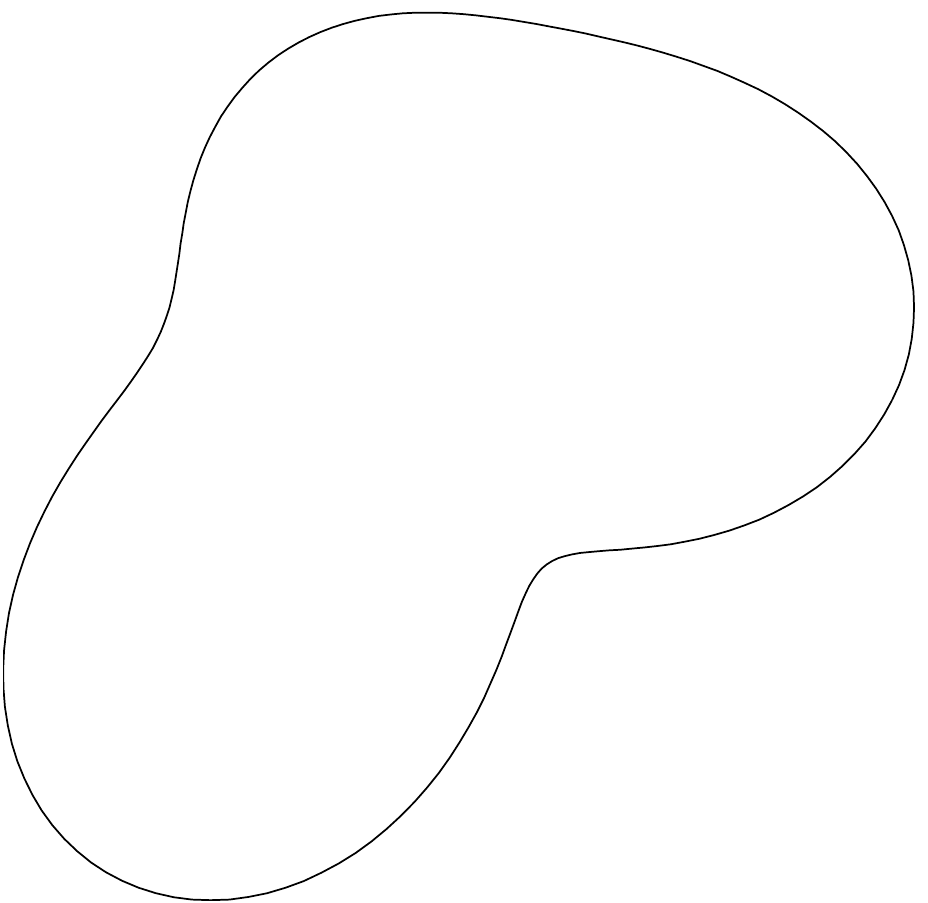}}
\hfill
(b) \raisebox{-1.8in}{\includegraphics[width=0.45\textwidth]{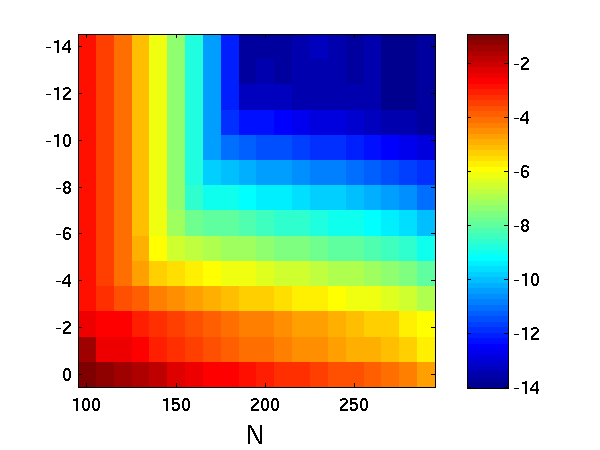}}\\
\centering 
(c) \raisebox{-1.3in}{\includegraphics[width=0.95\textwidth]{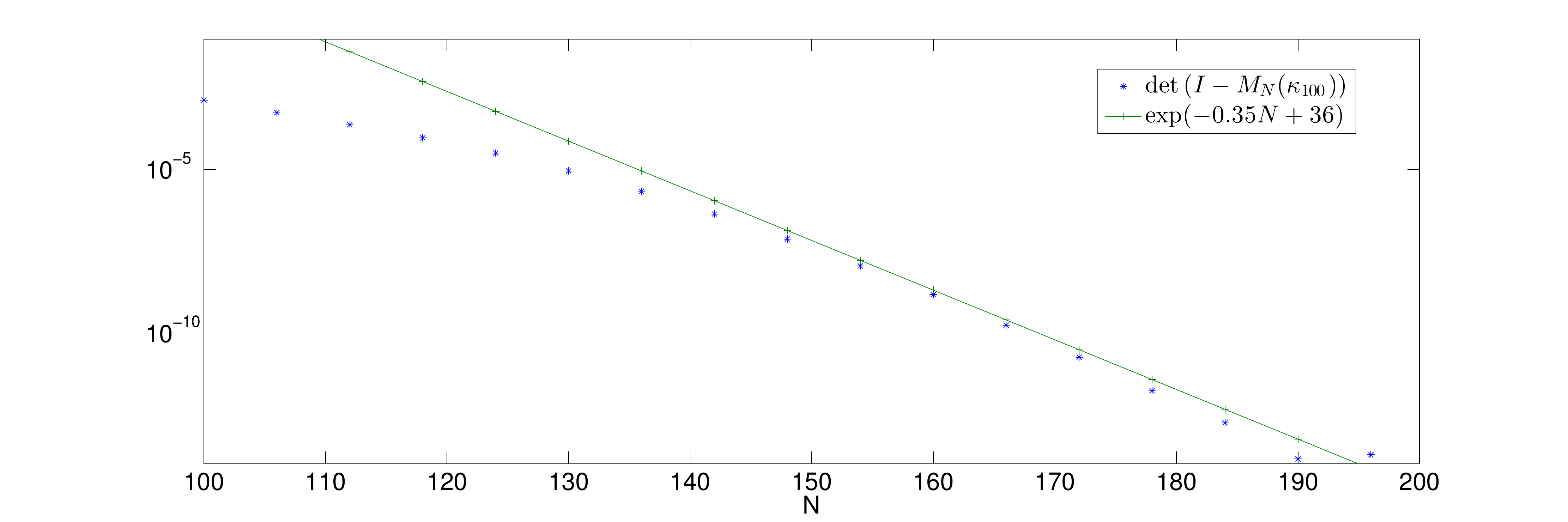}}
   \caption{
   (a) Domain defined by $r(\theta) = 1+0.2\cos{3\theta}+0.3\sin{2\theta}$. 
   (b) $\log_{10}{f_{N}(\kappa)}$ near $\kappa_{100} = 20.43009417604$ (converged value).
           the vertical axis shows $\log_{10}{(\kappa-\kappa_{100})}$; 
    (c) the convergence of $f_{N}(\kappa_{100})$ to zero. $N$ is the number of quadrature nodes on the boundary.} 
         \label{fig: ns} 
\end{figure} 

\subsection{Convergence of the determinant roots to the eigenfrequencies}
With the same domain as above, we now verify the claim of the previous
section that a root converges as fast as the rate of vanishing
of the determinant at a true eigenfrequency.
We solve for roots of $f_{N}(\kappa)$ on the interval $[20.4, 20.5]$
containing $\kappa_{100}$ using the method of Section \ref{s:boyd}.
Fig.~\ref{fig: zeroConvNS}
shows at least exponential convergence of the numerical root
to its converged value $\kappa_{100}$.
Note that 14-digit accuracy (15-digit relative accuracy)
is achieved using only $N=180$.

\begin{figure}[!ht] 
   \includegraphics[width=0.9\textwidth]{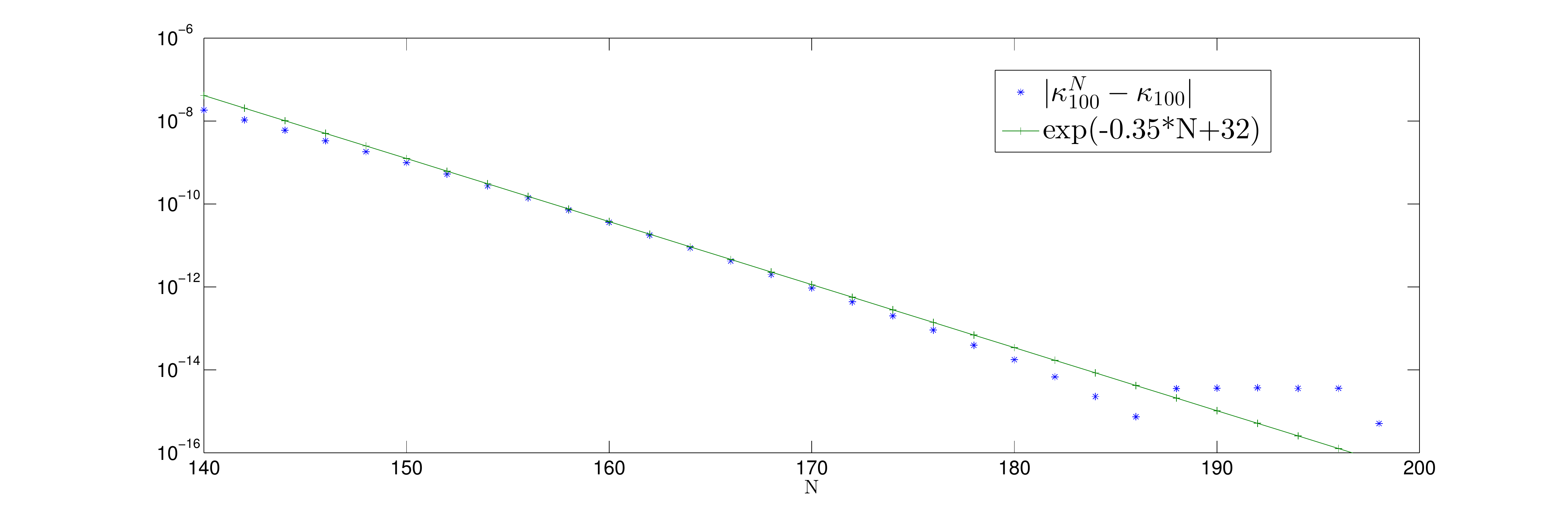}
   \caption{Convergence of the eigenfrequency error with $N$.
The vertical axis shows the error
(relative to its converged value)
of the root found by the method of Sec.~\ref{s:boyd} at each $N$. 
$N$ is the number of quadrature nodes on the boundary.
}
\label{fig: zeroConvNS}
\end{figure}

\begin{figure}[!ht]
(a) \raisebox{-2.4in}{\includegraphics[width=0.45\textwidth]{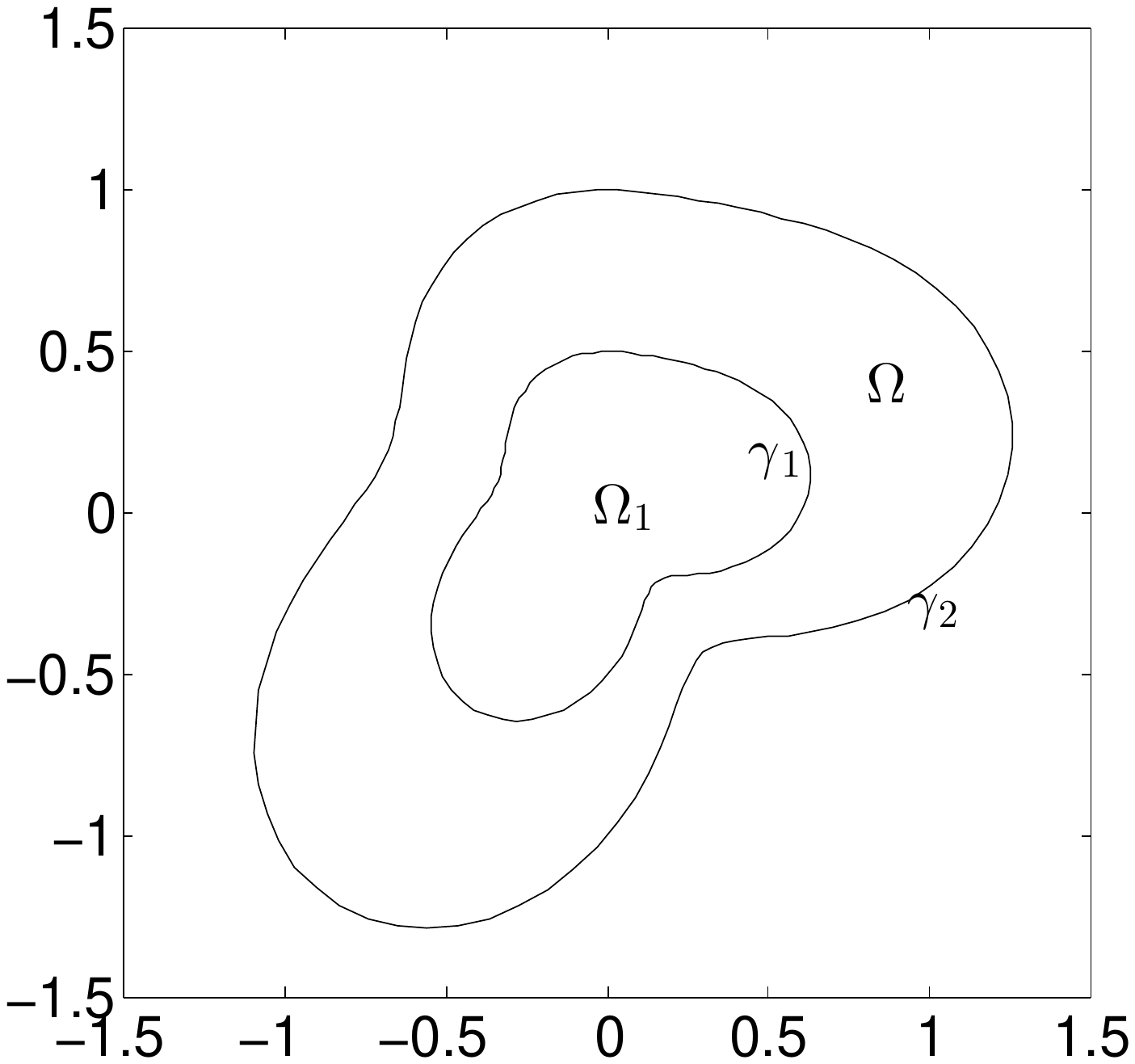}}
\hfill
(b) \raisebox{-2.4in}{\includegraphics[width=0.45\textwidth]{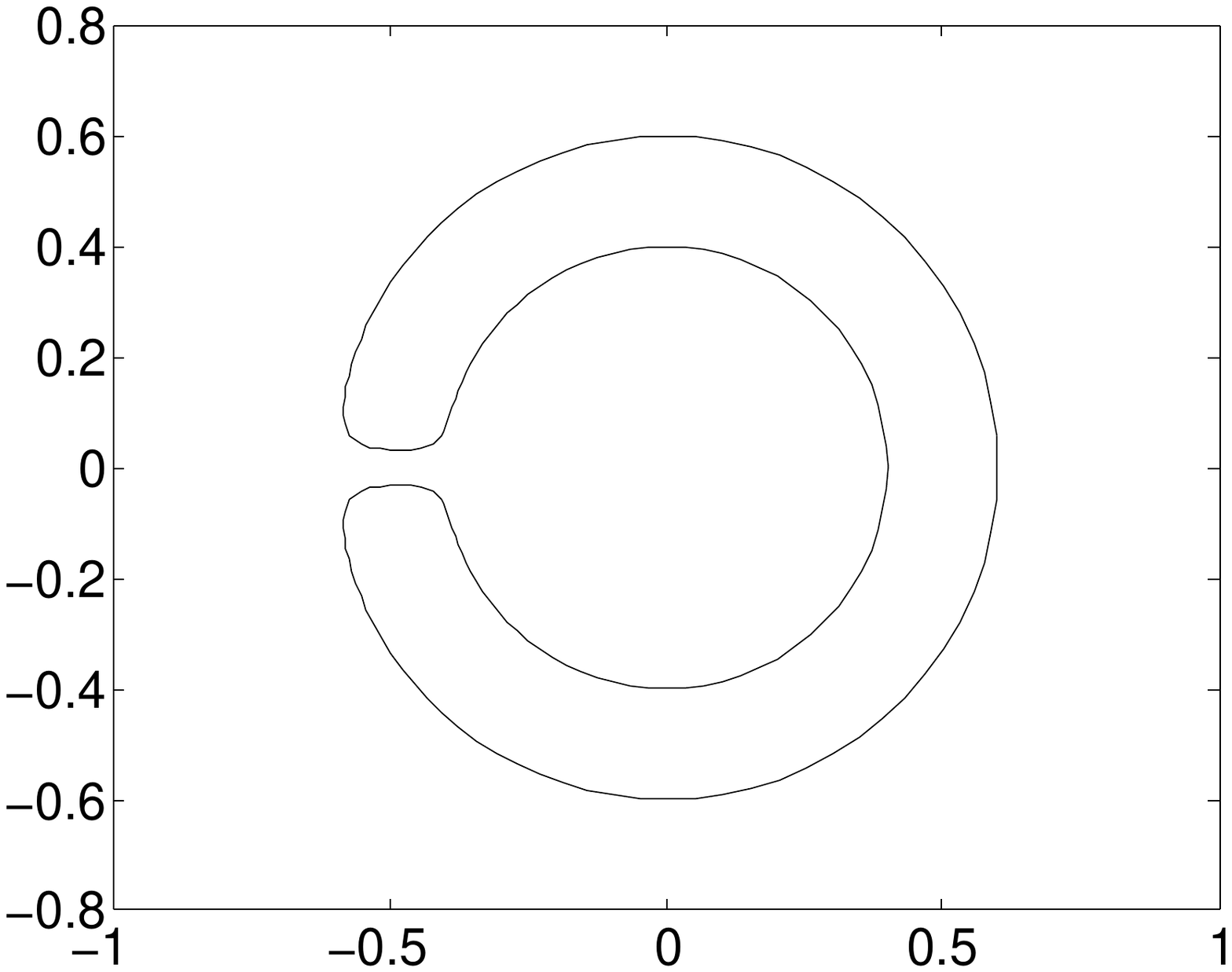}}
\vspace{-0.7in}
\caption{
(a) Annular domain with boundary curves $\gamma_{2}: r(\theta) = 1+0.2\cos{3\theta}+0.3\sin{2\theta}$, and $\gamma_{1}: r(\theta) = 0.5+0.1\cos{3\theta}+0.15\sin{2\theta}$, $0 \leq \theta \leq 2\pi$.
(b) Crescent-shaped domain with strong exterior resonances,
with polar parametric description
$r(s) = \frac{0.2}{1+\exp{(4(s-3\pi/2)(s-\pi/2)})}+0.4$,
$\theta(s) = -\frac{49}{50}\pi\sin{s}$,
$0 \leq s \leq 2\pi$.
}
\label{fig:annularSector}
\end{figure}

\section{The resonance phenomenon and multiply-connected domains}
\label{s:res}
If the domain $\Omega$ has a hole,
Theorem~\ref{kress} does not apply,
and we cannot therefore know that every root of the Fredholm determinant
$f(\kappa)$ indicates a
Dirichlet eigenfrequency of $\Omega$.
The following lemma characterizes this new scenario.
We denote the inner boundary $\gamma_{1}$ and outer boundary $\gamma_{2}$.
Also let $\Omega_{1}$ be the domain that $\gamma_{1}$ encloses;
see Fig.~\ref{fig:annularSector}(a).
\begin{lemma}
Let $\Omega$ be a domain with a hole $\Omega_{1}$, and boundary
$\Gamma = \gamma_{1} \cup \gamma_{2}$.
Then the operator $I-2D$ on $\Gamma$ has a nontrivial nullspace if $\kappa$ is a Neumann eigenfrequency of $\Omega_{1}$.
\label{l:hole}
\end{lemma}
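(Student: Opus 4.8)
The plan is to exhibit an explicit nontrivial density $\varphi \in C(\Gamma)$ lying in the nullspace of $I-2D$, built directly from a Neumann eigenfunction of the hole. Let $w$ satisfy $(\Delta+\kappa^2)w=0$ in $\Omega_1$ with $\partial w/\partial n_1 = 0$ on $\gamma_1$ and $w \not\equiv 0$, where $n_1$ denotes the outward normal of $\Omega_1$ along $\gamma_1$. Since the normal $n$ of $\Omega$ points \emph{into} the hole along $\gamma_1$, we have $n_1 = -n$ there; this sign reversal is the crux of the whole argument.

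First I would apply Green's representation formula \eqref{GRF} to $w$ on the domain $\Omega_1$. The single-layer term carries the factor $w_{n_1}^- = \partial w/\partial n_1$, which vanishes by the Neumann condition, leaving $-\mathcal{D}_1 w = w$ in $\Omega_1$ and $-\mathcal{D}_1 w = 0$ in $\mathbb{R}^2\setminus\overline{\Omega_1}$, where $\mathcal{D}_1$ is the double-layer potential over $\gamma_1$ taken with the normal $n_1$. Next I define $\varphi$ on $\Gamma$ to equal $w|_{\gamma_1}$ on $\gamma_1$ and $0$ on $\gamma_2$, and set $u := \mathcal{D}\varphi$, the double-layer potential over all of $\Gamma$ with the normal $n$. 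Because $\varphi$ is supported on $\gamma_1$ and $n=-n_1$ there, $u = -\mathcal{D}_1 w$; by the previous step $u = w$ inside the hole $\Omega_1$ and $u = 0$ everywhere else, in particular throughout the annular region $\Omega$ and the unbounded exterior of $\gamma_2$.

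I then read off the interior trace $u^-$, the limit taken from the $-n$ side, which is the $\Omega$ side on both boundary components. On $\gamma_2$ both one-sided limits vanish since $u\equiv 0$ in a neighborhood of $\gamma_2$; on $\gamma_1$ the $\Omega$-side limit is again $0$, while the $\Omega_1$-side limit recovers $w|_{\gamma_1}$, consistent with the jump $u^+ - u^- = \varphi$. Hence $u^-\equiv 0$ on $\Gamma$, and the jump relation \eqref{JR3} gives $(D-\half)\varphi = u^- = 0$, that is $(I-2D)\varphi = 0$.

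Finally I would argue $\varphi \neq 0$. If $w|_{\gamma_1}$ vanished identically, then $w$ would have both zero Dirichlet and zero Neumann data on $\gamma_1$, so Holmgren's uniqueness theorem would force $w\equiv 0$ in $\Omega_1$, contradicting that $w$ is a nontrivial eigenfunction; thus $\varphi = w|_{\gamma_1}\neq 0$. The main obstacle is bookkeeping the orientations consistently: tracking that the Neumann condition kills the single-layer term precisely because of the reversal $n_1=-n$, and correctly attributing each one-sided trace $u^\pm$ to its physical region (hole versus annulus versus unbounded exterior). Minor care is also needed to ensure $w$ is regular enough up to $\gamma_1$ for the traces and the representation formula to apply, which follows from the smoothness of $\gamma_1$.
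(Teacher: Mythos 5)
Your proof is correct and is essentially the paper's own argument: both take the Neumann eigenfunction's trace on $\gamma_1$ extended by zero to $\gamma_2$ as the density, apply Green's representation formula \eqref{GRF} in $\Omega_1$ (the Neumann condition killing the single-layer term) to conclude the resulting double-layer potential vanishes on the $\Omega$ side, and then read off $(D-\half)\varphi=0$ from the jump relation \eqref{JR3}. If anything, your write-up is slightly more complete than the paper's, since you track the normal-orientation sign $n=-n_1$ explicitly and supply the Holmgren/unique-continuation argument for $\varphi\neq 0$, which the paper leaves implicit.
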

Recall that Neumann eigenfrequencies are the discrete $\kappa$ values where
nontrivial solutions to \eqref{helm} with $u_n=0$ on $\Gamma$ exist.
Such eigenfrequencies generally do not coincide with the desired Dirichlet
eigenfrequencies, thus our method of double layer potential produces incorrect roots for domains not simply connected.
The obvious generalization of the lemma
to domains with multiple holes also holds.

\begin{proof}
$\Omega_{1}$ has countably many interior Neumann eigenmodes. For any such eigenmode with boundary data $u$ and $u_{n}\equiv 0$ on $\gamma_{1}$,
we can first extend $u$ to $\tilde{u}$ defined on $\gamma_{1} \cup \gamma_{2}$ by setting $\tilde{u}=0$ on $\gamma_{2}$. We construct the double-layer potential
$\mu:=\mathcal{D} \tilde{u}$ at the corresponding eigenfrequency. Thus $\mu$ is a solution to the Helmholtz equation on $\mathbb{R}^2 \backslash \gamma_{1}$. Furthermore, for $x \in \mathbb{R}^{2} \backslash \overline {\Omega_{1}}$,
$\mu (x) = \mathcal{D} \tilde{u}
= \mathcal{D} u = \mathcal{D}u - \mathcal{S} u_{n} = 0$
by Green's representation theorem \eqref{GRF}
applied to the exterior of $\Omega_1$.
Consider the continuous extension of $\mu$ from inside $\Omega$ to $\gamma_{1}$, from the jump relation \eqref{JR3},
we see $(D-\frac{1}{2})\tilde{u} = 0$, i.e. the integral equation has a nontrivial solution.
\end{proof}

For such a domain, if one solves for the roots of $f_{N}(\kappa)$, one gets not only the Dirichlet eigenfrequencies of $\Omega$, but also the Neumann eigenfreqencies of the enclosed domain $\Omega_{1}$, which we call the spurious roots.

This has an important consequence:
even for a simply connected domain, as the geometry becomes more concave,
spurious roots may show up numerically
(first observed in this context by B\"acker \cite[Sec.~3.3.6]{backerbim}).
The operator $I-2D(\kappa)$ becomes singular for a $\kappa$ very close
to the real axis, resulting in a determinant very close to zero for a real
$\kappa$.
Any root-finding method working in finite precision thus cannot distinguish those $\kappa$ from true eigenfrequencies. Physically, this corresponds to a resonance of the exterior Neumann boundary-value problem for the domain's boundary,
since the operator $I-2D$ also arises in the potential-theoretic solution of this problem.
In \cite{ellipseres} it is proved, via an elliptical cavity domain,
that such boundary value problem resonances may exist with $\im \kappa$
becoming exponentially small as $\re \kappa$ grows.

We now demonstrate this problem, using the concave domain of Fig.~\ref{fig:annularSector}(b). It closely resembles, and can be viewed as a smooth approximation of, an annular sector with inner radius $0.4$, outer radius $0.6 $ and angular ``openness'' parameter $\frac{49}{50}\pi$. The disk with radius $0.4$ has a Neumann eigenfrequency $\kappa_{N} = 26.2996521844$. And indeed, for the
cresent domain, our
root-finding method returns a spurious root $\kappa_{0} = 26.30048303974$,
clearly visible in Fig.~\ref{fig: sweep}(b). This $\kappa_{0}$ is not exactly $\kappa_{N}$ because the crescent domain is not an exact annulus.

\subsection{A new representation for the Dirichlet eigenvalue problem}

We can remedy the above non-robustness by constructing the boundary integral equation using the {\em combined field} potential,
$$u : = \mathcal{D}\varphi+i \eta \mathcal{S}\varphi~,$$
where $\eta$ is a real parameter which, following \cite{coltonkress}, we
set to be $\kappa$.
This is standard in the acoustic scattering literature, but to our
knowledge has not been used for the eigenvalue problem before.
(The idea was suggested in one sentence of \cite[Sec.~3.3.6]{backerbim}.)
Enforcing the Dirchlet boundary condition \eqref{bc} on the combined field potential gives the CFIE
\begin{equation}
\label{CFIE}
(I-2D-2i\eta S)\varphi= 0
\end{equation}
For the CFIE we have the following equivalence relation;
in contrast to Theorem~\ref{kress}, it does not require simply connectedness
of the domain.
\begin{theorem}  
\label{thm: CFIE}
Let $\Omega$ be a bounded domain with twice continuously differentiable boundary $\Gamma$.
For each $\kappa \in \mathbb{C}\backslash\{0\}$ with $\operatorname{Im}\kappa \geq 0$,
$\kappa^2$ is a Dirichlet eigenvalue of $\Omega$ if and only if
$I-2D(\kappa)-2i\eta S(\kappa)$ has a nontrivial nullspace,
where $\eta \neq 0$ is an arbitrary real number with
$\eta \operatorname{Re} \kappa \geq 0$.
\end{theorem}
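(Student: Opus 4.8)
The plan is to realize the combined-field density through the potential $u := \mathcal{D}\varphi + i\eta\mathcal{S}\varphi$ and to read off the nullspace of $I-2D-2i\eta S$ from the interior trace. Using the jump relations \eqref{JR1} and \eqref{JR3}, the interior limit is $u^- = (D-\half)\varphi + i\eta S\varphi = -\half(I-2D-2i\eta S)\varphi$, so $\varphi$ lies in the nullspace precisely when $u$ solves the interior Dirichlet problem with zero boundary data. Since $S$ and $D$ are compact for $C^2$ boundaries, $I-2D-2i\eta S$ is a compact perturbation of the identity, hence Fredholm of index zero; and under the bilinear pairing, with $S^T=S$ (symmetric kernel) and $D^T$ as defined, its nullspace has the same dimension as that of the transpose $I-2D^T-2i\eta S$. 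I would prove the two implications separately.

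For the forward implication, suppose $\kappa^2$ is a Dirichlet eigenvalue with eigenfunction $w$, and set $g:=w_n^-$, which is nontrivial (otherwise Green's representation \eqref{GRF} forces $w\equiv 0$). Because $w^-=0$, \eqref{GRF} reduces to $w=\mathcal{S}g$ in $\Omega$; taking the interior trace gives $Sg=w^-=0$, while taking the interior normal derivative via \eqref{JR2} gives $g=(D^T+\half)g$, i.e.\ $(I-2D^T)g=0$, exactly as in Lemma~\ref{l:nullspace}. Combining these yields $(I-2D^T-2i\eta S)g=(I-2D^T)g-2i\eta Sg=0$, so the transpose operator has a nontrivial nullspace, and by the index-zero Fredholm property so does $I-2D-2i\eta S$. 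The key observation is that the single-layer annihilation $Sg=0$ is automatic for the Dirichlet eigenfunction, so the added $S$ term does not disturb the nullvector produced by the double-layer argument.

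For the reverse implication, take $\varphi\neq 0$ with $(I-2D-2i\eta S)\varphi=0$ and form $u$ as above, so that $u^-=0$. If $u\not\equiv0$ in $\Omega$ then $u$ is a Dirichlet eigenfunction and we are done. Otherwise $u\equiv0$ in $\Omega$, whence $u_n^-=0$, and the jump relations \eqref{JR2}--\eqref{JR4} give $u^+=\varphi$ and $u_n^+=-i\eta\varphi$, so that $u$ is a radiating exterior field satisfying the impedance condition $u_n^++i\eta u^+=0$ on $\Gamma$. Exterior impedance uniqueness then forces $u\equiv0$ in $\mathbb{R}^2\setminus\overline{\Omega}$, and the trace jump $\varphi=u^+-u^-$ collapses to $\varphi=0$, a contradiction; hence $u\not\equiv0$ and $\kappa^2$ is a Dirichlet eigenvalue.

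The main obstacle is the exterior impedance uniqueness, which is exactly where the hypothesis $\eta\re\kappa\geq0$ is needed. I would obtain it from Green's first identity on $(\mathbb{R}^2\setminus\overline{\Omega})\cap B_R$: substituting $u_n^+=-i\eta u^+$ produces a boundary term $i\eta\int_\Gamma|u^+|^2\,ds$, and taking imaginary parts isolates the far-field flux through $\partial B_R$. For real $\kappa>0$ the sign condition makes both the boundary term and the nonnegative radiated energy vanish, so the far field is zero and Rellich's lemma gives $u\equiv0$; for $\im\kappa>0$ the radiating field decays exponentially, the sphere term drops out, and sign-definiteness of $\eta\int_\Gamma|u^+|^2$ against the $\im(\kappa^2)\int|u|^2$ term forces $u\equiv0$ (here one also notes that $\kappa^2$ is never a positive real, so the interior Dirichlet problem invoked above is uniquely solvable, closing the logic on both sides of the equivalence). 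These uniqueness facts are standard in the scattering literature \cite{coltonkress}; the only care required is the orientation of the normal on $\Gamma$ and matching the sign convention of $\eta$ to the stated hypothesis.
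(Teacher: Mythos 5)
Your forward direction and the skeleton of your reverse direction coincide with the paper's own proof: the same null vector $g=w_n^-$ with $(I-2D^T)g=0$ and $Sg=0$ from \eqref{GRF}, the same transpose/Fredholm-alternative step, and the same potential $u=\mathcal{D}\varphi+i\eta\mathcal{S}\varphi$ with identical jump-relation algebra giving $u^-=0$, $u^+=\varphi$, $u_n^+=-i\eta\varphi$. The genuine gap is the single sentence ``Exterior impedance uniqueness then forces $u\equiv0$ in $\mathbb{R}^2\setminus\overline{\Omega}$.'' When $\Omega$ has holes---which is exactly the case this theorem is designed to cover, since for simply-connected domains Theorem~\ref{kress} already gives the equivalence---the complement $\mathbb{R}^2\setminus\overline{\Omega}$ is \emph{disconnected}: it consists of the unbounded component plus the holes. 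The uniqueness theorem you cite from the scattering literature, and the Green's-identity-plus-Rellich argument you sketch for it, apply only to the unbounded component, where the radiation condition lives; they say nothing about $u$ inside a hole $\Omega_1$, which is a bounded component of the complement carrying no radiation condition at all. Without killing $u$ in the holes you cannot conclude $\varphi=u^+-u^-=0$ on all of $\Gamma$, so the contradiction is not reached for multiply-connected $\Omega$, and your proof as written establishes the theorem only in the simply-connected case.

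The paper closes precisely this case with a separate argument: on a hole boundary the outward normal of $\Omega_1$ is $n'=-n$, so the impedance condition flips to $i\eta\mu-\mu_{n'}=0$ as in \eqref{imp1}; Green's first identity on $\Omega_1$ then yields
\begin{equation*}
2\re\kappa\,\im\kappa\,\|\mu\|^2_{L^2(\Omega_1)} \;=\; -\eta\,\|\mu\|^2_{L^2(\gamma_1)},
\end{equation*}
whose two sides have opposite signs under the hypotheses $\im\kappa\geq 0$, $\eta\re\kappa\geq 0$, $\eta\neq 0$, forcing $\mu=0$ on $\gamma_1$, hence $\mu_{n'}=0$ there, hence $\mu\equiv 0$ in $\Omega_1$ by Green's representation. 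Your own sketch could be repaired along the same lines: if you run Green's identity over all of $(\mathbb{R}^2\setminus\overline{\Omega})\cap B_R$, the boundary term $i\eta\int_\Gamma|u^+|^2\,ds$ does carry the favorable sign on the hole boundaries as well (because both $u^+$ and $u_n^+$ are limits taken from the side into which $n$ points, on every component of $\Gamma$), and then zero Cauchy data on $\gamma_1$ annihilates $u$ in each hole. But these are exactly the steps---the orientation check on hole boundaries and the ``zero Cauchy data implies vanishing in the hole'' conclusion---that must be stated explicitly; Rellich's lemma alone only gives $u\equiv 0$ in the unbounded component.
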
     
\begin{proof} 
"$\Rightarrow$" Suppose $u$ is an eigenfunction,
using the same argument as in Lemma \ref{l:nullspace} we have 
$(1-2D^T)u_{n}^{-}=0$. Green's representation theorem \ref{GRF} says
$Su_{n}^{-}=0$. Thus $(1-2D^T-2i\eta S)u_{n}^{-}=0$
In the dual system 
$\langle C(\Gamma), C(\Gamma) \rangle$ with the bilinear form
$\langle \varphi, \psi \rangle:= \int_{\Gamma} \varphi(x) \psi(x) dx$,
$S$ is self-adjoint and the adjoint of $D$ is $D^T$ \cite[p. 41]{LIE}.
By the Fredholm alternative, $I-2D-2i\eta S$ has a nontrivial nullspace.

"$\Leftarrow$" Suppose $\varphi\in \Null(I-2D-2i\eta S)$ and $\varphi$ is not identically zero. 
Consider $\mu:=(\mathcal{D}+i\eta \mathcal{S})\varphi\in C^2(\mathbb{R}^2\backslash\Gamma)$, 
then $\mu$ satisfies \eqref{helm} by construction. 
We look at $\mu^{\pm}$ and $\mu_{n}^{\pm}$ using the jump relations \eqref{JR1} through \eqref{JR4}.
First, $\mu$ satisfies the zero Dirichlet boundary condition for the interior problem since
$\mu^{-}=(D-\frac{1}{2}+i\eta S)\varphi =0$.
So now we need only show that $\mu$ is nontrivial.
Suppose $\mu$ is identically zero in $\Omega$, then
$\mu_{n}^{-}=[T+i\eta (D^T+\frac{1}{2})]\varphi=0$.
Thus we have
$\mu^{+} = (D+\frac{1}{2} +i\eta S)\varphi = \varphi$, and
$\mu_{n}^{+} = [T+i\eta(D^T-\frac{1}{2})]\varphi = -i\eta \varphi$.
Therefore $\mu$ is a solution to \eqref{helm} on $\Omega_{+}$ with the impedance boundary condition
\be
i\eta \mu+\mu_{n}\;=\;0 \qquad \mbox{ on } \hspace{0.5cm} \Gamma
~,
\label{imp}
\ee
and $\mu$ is radiative in
the exterior component containing infinity.
In this infinite component $\mu$ has a unique solution when $\eta \operatorname{Re} \kappa \geq 0$ ~\cite[p.~97]{CK83},
thus $\mu\equiv 0$ in this component.
So $\varphi$ must be identically zero on the
boundary of this component. If $\Omega$ has no holes, we have reached a contradiction.
Otherwise, let $\Omega_{1}$ be any of the holes in $\Omega$, with boundary $\gamma_1$.
Let $n'$ be the unit normal vector pointing to the exterior of $\Omega_{1}$, then
$n' = -n|_{\gamma_{1}}$. $\mu$ is a solution to \eqref{helm} on $\Omega_{1}$
with boundary condition
\be
i\eta \mu-\mu_{n'}\;=\;0 \qquad \mbox{ on } \hspace{0.5cm} \gamma_{1}
~,
\label{imp1}
\ee
Multiplying each side of \eqref{helm} by $\overline{\mu}$, 
integrating over $\Omega_{1}$ and applying Green's first identity
and \eqref{imp1}, we get
\begin{equation}
\kappa^{2} \| \mu \|^{2}_{L^{2}(\Omega_{1})} = \| \nabla \mu \|^{2}_{L^{2}(\Omega_{1})}-i \eta \| \mu \|^{2}_{L^{2}(\gamma_{1})}
~.
\end{equation}
Taking the imaginary we have
$2\operatorname{Re} \kappa \operatorname{Im}\kappa \| \mu \|^{2}_{L^{2}(\Omega_{1})}  = -\eta \| \mu \|^{2}_{L^{2}(\gamma_{1})}$, which is impossible given all the conditions on $\kappa$ and $\eta$
 unless $\mu$ vanishes on $\gamma_{1}$.
Hence $\mu_{n'}$ vanishes on $\gamma_1$ by \eqref{imp1}. By Green's representation theorem,
$\mu$ is identically zero in $\Omega_{1}$.
We have shown $\mu\equiv0$ in all of $\mathbb{R}^2\backslash\overline{\Omega}$
and this means $\varphi$ is identically zero on $\Gamma$,
which is a contradiction.
So $\mu$ is a nontrivial solution to \eqref{helm} -\eqref{bc}, hence an eigenfunction.
\end{proof}

Thus by adopting the combined field integral equation, we
have a robust method with no spurious frequencies where the
boundary operator is singular.
We show this in Fig.~\ref{fig: sweep},
where we show the minimum singular value of (the Nystr\"om approximation to)
the original operator and of the CFIE,
for (a) a doubly-connected domain and (b) a simply-connected domain with
strong exterior resonances.
In both cases this shows that the CFIE removes the spurious roots.

We now mention numerical implementation issues for the
CFIE formulation.

For the spectrally-accurate discretization of
the single-layer operator, we use the same method
as for the double-layer, replacing
$L$ by $Q(t,s) = \Phi(x(t),x(s)) |x'(s)|$,
and replacing the logarithmically singular term
\eqref{L1} by \cite[Eq.~(2.6)]{kress91},
\be
Q^{(1)}(t,s):=-\frac{1}{2\pi}
J_0(\kappa r(t,s)) \sqrt{x_1'(s)^2+x_2'(s)^2}
\label{Q1}
\ee
and defining $Q^{(2)}$ as before by the difference \eqref{L2}.
The resulting matrix we call $Q_{N}{(\kappa)}$.
For each $N$ the determinant of the $N$-node Nystr\"om discretization matrix
$I-M_{N}(\kappa)-i\eta Q_{N}(\kappa)$ is analytic in $\kappa$. Thus we are able to apply the same
root-finding method to it as before, and achieve rapid convergence with $N$
for the roots, hence eigenvalues found.
\begin{remark}
Note that for $\eta \neq 0$, $D+i\eta S$ is no longer in $\mathcal{J}_{1}(L^{2}[0, 2\pi])$,
so the main convergence theorem \ref{t:main} does not readily apply. 
Instead let $\mathcal{J}_{2}(L^{2}[0, 2\pi])$ be the space of {\em Hilbert-Schmidt operators} on $L^{2}[0, 2\pi]$, 
which is the collection of all linear operators with square summable singular values,
 then $D+i\eta S$ is in $\mathcal{J}_{2}(L^{2}[0, 2\pi])$. 
Given $A \in  \mathcal{J}_{2}(L^{2}[0, 2\pi])$, $ \displaystyle \prod_{j=1}^\infty(I-\lambda_{j}(A))$
 is not necessarily convergent. However, we expect that numerically,
  the convergence theorem \ref{t:main} should be {\em close} to holding.
  Since the singular values of $S$ decay like $\frac{1}{j}$,
  their sum only diverges logarithmically.
 In addition, our experiments show that $\det(I-M_{N}(\kappa)-i\eta Q_{N}(\kappa))$
  converges to zero as $N \rightarrow \infty$ if and only if $\kappa=\kappa_{j}$.
 \end{remark}
 
\begin{figure}[!ht]
(a)\raisebox{-1.8in}{\includegraphics[width=0.95\textwidth]{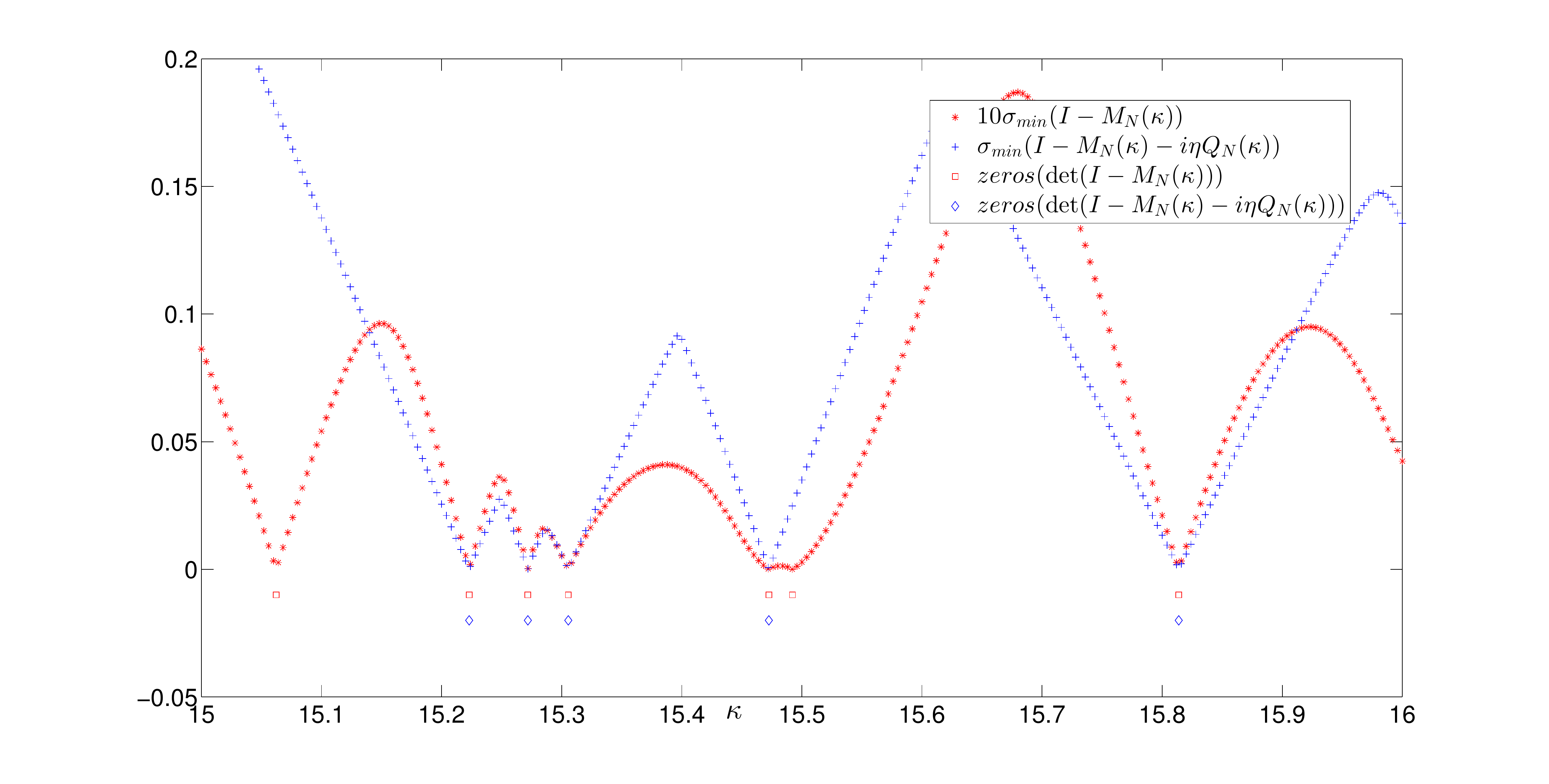}}
\\
(b)\raisebox{-1.8in}{\includegraphics[width=0.95\textwidth]{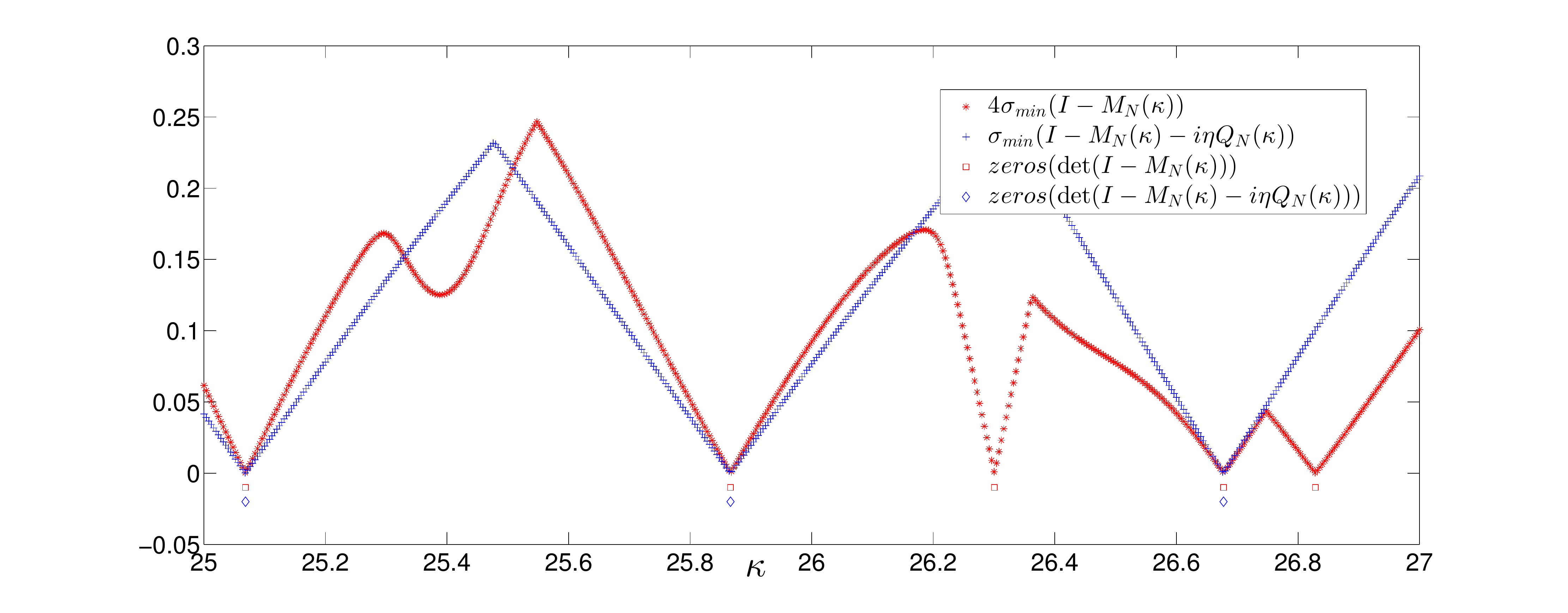}}
\caption{
Lowest singular values vs frequency $\kappa$, for:
(a) the annular domain Fig.~\ref{fig:annularSector}(a);
(b) the crescent domain Fig.~\ref{fig:annularSector}(b).
$\sigma_{min} $ denotes the smallest singular value
of the discretized
$I-2D(\kappa)$ (red), and CFIE $I-2D(\kappa)-2i\eta S(\kappa)$ (blue).
The true eigenfrequencies are shown by the blue diamonds.
$N$ is the number of quadrature nodes on the boundary}
\label{fig: sweep}
\end{figure}

\section{Close eigenfrequencies and the singular value method}
\label{s:svd}
Our root-finding method worsens in accuracy when
$f(\kappa)$ has close roots, or roots with multiplicity higher than one.
\footnote{Note that we do not expect this to occur too often, since
for a generic domain eigenvalues are all simple \cite{KS}.}
In this section we discuss how we overcome this problem if it does occur,
by reverting to the standard SVD method.
Indeed, no method that relies on evaluating the Fredholm determinant
$f(\kappa)$ alone could succeed in this case, because the root-finding
problem is well known to be ill-conditioned with respect to perturbations in
the function
(eg, for a polynomial, perturbations in its coefficients).

We discuss the case of two close eigenfrequencies
$\kappa_j \approx \kappa_{j+1}$.
Then $f(\kappa)=t(\kappa)(\kappa-\kappa_j)(\kappa-\kappa_{j+1})$
for some locally smooth function $t(\kappa)$.
For simplicity, let $f$ be perturbed by a constant value $\eps$;
then, for small $\eps$, the change induced in the root $\kappa_j$
is of size
\be
\delta\kappa \approx \biggl|\frac{\eps}{f'(\kappa_j)}\biggr| =
\biggl|\frac{\eps}{(\kappa_{j+1}-\kappa_j)t(\kappa_j)}\biggr|
~,
\label{cond}
\ee
which blows up inversely with the gap between the eigenfrequencies.
This particular perturbation demonstrates the ill-conditioning; other perturbations lead generically to a similar effect.
Even for $\eps \approx 10^{-16}$ we may only retain accuracy $O(\eps^{1/2})$
as two roots approach each other, and more if there are more close
roots or a higher-order degeneracy.

To remedy this, when two roots are found closer than $s \approx 10^{m} \eps$,
where $m$ is the desired number of digits of accuracy in rootfinding,
we propose switching to a more expensive method based on the SVD.
This requires finding the lowest singular values of the CFIE Nystr\"om
matrix $I-M_N(\kappa) - i\eta Q_N(\kappa)$,
and is very similar to existing eigenvalue solvers
\cite{backerbim,gsvd}.
We only use the SVD when forced to do so
since, due to the high cost of the SVD, and the increased number of
function evaluations required to find each root,
we will show that it is
an order of magnitude less efficient than our proposed method.

Thus the choice of the parameter $s$ affects the robustness and the speed of the algorithm. The smaller it is, the less often roots less than $s$ apart will
occur, and thus the faster the computation.
However, smaller $s$ causes a worsening of the accuracy of close roots.
This is more severe for multiple roots:
for $n>1$, an order-$n$ root has error on the order of $\eps^{\frac{1}{n}}$,
Thus to obtain desired accuracy, $s$ has to be set to be large enough.
In practice we fix $s = 10^{-3}$.

Once we switch to using the SVD on an interval of frequency $\kappa$,
the smallest singular value $\sigma_{\min}(I-M_{N}(\kappa)-i\eta Q_{N}(\kappa))$ is far from analytic in $\kappa$ (see Fig.~\ref{fig: sweep} which shows the typical W-shaped function), so the Boyd's method is not useful.
Instead we use recursive subdivision starting on a regular grid of values, followed by iterative parabolic fitting of $\sigma_{\min}^2(I-M_{N}(\kappa)-i\eta Q_{N}(\kappa))$
as detailed in \cite[Appendix~B]{sca}. This algorithm is available in \mpspack\ \cite{mpspack}
as \verb?@evp/gridminfit.m?

To demonstrate the higher accuracy of the SVD method over the Boyd's method in the presence of close eigenfrequencies, we choose an ellipse domain,
and vary its eccentricity to cause a near-degeneracy of controllable separation $\kappa_{j+1}-\kappa_j$.
Fig.~\ref{fig:ellipse} shows the eigenfrequencies passing through
each other as a function of the eccentricity,
solved by the determinant (red) and by the SVD methods (blue).
Errors of absolute size around $10^{-7}$ appear in the determinant
method but not the SVD method.
As expected from \eqref{cond}, we see the errors $\delta\kappa$ blow up like $\frac{1}{|\kappa_{j+1}-\kappa_{j}|}$.
\begin{figure}[h!]
  \includegraphics[width=1\textwidth]{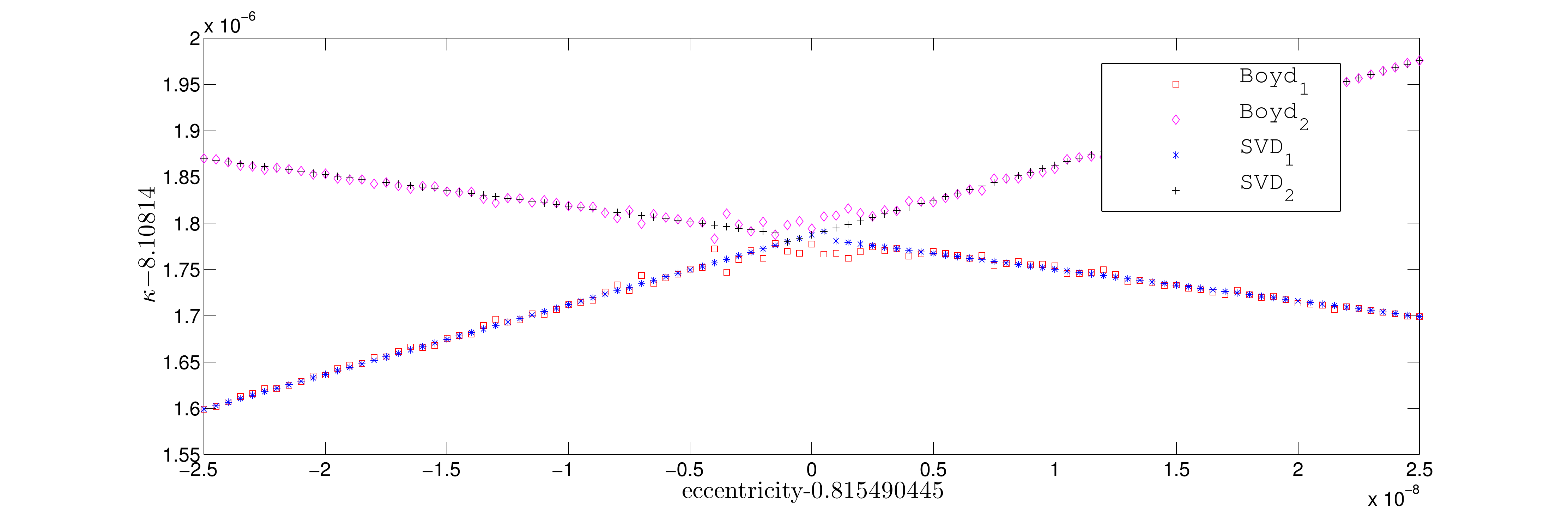}
  \caption{Two close eigenfrequencies of an ellipse crossing as a function of
the eccentricity.
Red shows values computed by Boyd's method applied to the determinant on the frequency interval $\kappa\in[7, 9]$. Blue shows values computed by the SVD method
of Sec.~\ref{s:svd}. $\eta$ is set to be zero since we expect and observe
no exterior resonances.}
  \label{fig:ellipse}
\end{figure}

\section{Numerical performance of the solver}
\label{s:ps}
In this section we demonstrate the
improved efficiency of our solver, the Boyd's method with determinant, compared
to an existing boundary-integral solver, namely the 
SVD method described in the previous section.
We used a Linux workstation with two quad-core E5-2643 3.3GHz Xeon CPUs,
running MATLAB R2013b,
except for Hankel function evaluations which use Rokhlin's
fortran code {\tt hank103.f} (eg see \cite{mpspack}).

\subsection{Non-resonant domain solved via pure double-layer representation}
We computed the first 100 eigenfrequencies for the domain in Fig.~\ref{fig: ns}(a)
using both the Boyd's method and the standard SVD method
as shown on the first two rows of table \ref{tab:t}, respectively. For both methods, 
the initial number of quadrature nodes is scaled by setting $N = \max{(150, 100+5\kappa)}$.
For the Boyd's method, the initial interval used was $[2, 5]$, converged $\kappa_{100} = 20.4300941760382$
and the largest $N$ is 202.
For moderate eigenfrequencies, as shown on the last two rows of table \ref{tab:t},
we solved the 6 eigenfrequencies in the interval $[100, 100.1]$ using 750 quadrature nodes
using both methods. We used a pure double-layer potential ($\eta=0$)
since this domain is simply-connected and has no problem with
exterior resonances. The error parameter from Section~\ref{s:boyd} is set to $\beta=10^{-14}$.

For the Boyd's method,
the error $\epsilon$ of each eigenfrequency is estimated using the
magnitude of the imaginary part of
the root found, as explained in Sec.~\ref{s:boyd}.
For the SVD method, error $\epsilon$ is estimated as follows.
From Theorem~1 in \cite{bnds}, the distance of any fixed $\kappa_{0}^{2}$ to the true spectrum
can be bounded by $C\kappa_{0} t[u]$, where $C$ is a constant depending only on $\Omega$, $u$ is a solution to \eqref{helm} with $\kappa = \kappa_{0}$,
and $t[u] := \| u \|_{L^{2}(\partial \Omega)}/ \| u \|_{L^{2}(\Omega)}$
is a measure of the relative boundary error.
Since our domain is star-shaped, we can use \cite[(6.1)--(6.2)]{bnds}
to give an explicit estimate for $C$ of approximately 3.5.
By representing $u$ as double layer potential with density $\varphi$, we have $u |_{\partial \Omega} = (D - \frac{1}{2}) \varphi$ and $ u |_{\Omega} = \mathcal{D}\varphi$. Numerically $t[u]$ can be bounded by $\frac{\sigma_{min}(I-M_{N})}{2\| \mathcal{D}\hat{\varphi}\|_{L^{2}(\Omega)}}$, where $\hat{\varphi}$ is the associated right singular vector of $\sigma_{min}(I-M_{N})$. Thus we estimate the relative error in $\kappa$ to be $\frac{C\sigma_{min}(I-M_{N})}{2\kappa \| \mathcal{D}\hat{\varphi}\|_{L^{2}(\Omega)}}$, where $ \| \mathcal{D}\hat{\varphi}\|_{L^{2}(\Omega)}$ is estimated using
crude quadrature scheme in the interior of $\Omega$.
\begin{table}[!ht]
\begin{tabular}{|c| c | c | c | c | c | c | c | c |}
\hline
task & method     & $\max{\operatorname{Im}{\tilde{\kappa}}}$ & mean ${\operatorname{Im}{\tilde{\kappa}}}$ & $\max{\sigma_{\min}}$ & mean $\sigma_{\min}$ &  $\max{\epsilon}$ &mean $\epsilon$ & Time (s)\\ \hline
\multirow{2}{*}{$\kappa \le 20.5$}
&Boyd's  &7.3e-15        & 1.4e-15           & 1.7e-14        & 2.1e-15        &  3.8e-14 & 6.2e-15 & 20\\ \cline{2-9}
&SVD &      -            &       -              & 6.8e-11        & 1.6e-12     &1.1e-10  & 2.6e-12      & 42\\ \hline
\multirow{2}{*}{$\kappa \sim 100$}
&Boyd's  &1.6e-15        & 7.4e-16           & 6.1e-15        & 3.2e-15        &  5.5e-14 & 3.3e-14 & 16\\ \cline{2-9}
&SVD &      -            &       -              & 3.1e-11        & 5.5e-12     &1.1e-11  & 2.0e-12      & 151\\ \hline
\end{tabular}
\caption{Performance data for the nonsymmetric domain in Fig.~\ref{fig: ns}(a)}
\label{tab:t}
\end{table}
\vspace{-0.2in}

\subsection{Crescent-shaped domain solved via the CFIE}
For an example requiring the combined field potential
for a robust solution,
we test the highly-resonant crescent domain in Fig.~\ref{fig:annularSector}(b). Computation is done again for the first 100 eigenfrequencies.
In both methods, the number of quadrature nodes is given by $N = \max{(350, 100+7\kappa)}$.
For the Boyd's method, the initial interval used was $[15, 17]$,
converged $\kappa_{100} = 50.17535680154$
and the largest $N$ is 456.
The error parameter is set to $\beta=10^{-12}$.

For error estimate, the $C$ value for this highly concave domain is not known but we expect it to be $O(1)$ based on discussion in \cite{bnds}.
Thus we computed $\frac{\sigma_{min}(I-M_{N}-i\eta Q_{N})}{2\| \mathcal{D}\hat{\varphi}\|_{L^{2}(\Omega)}}$,
where $\hat{\varphi}$ is the associated right singular vector of $\sigma_{min}(I-M_{N}-i\eta Q_{N})$, as an estimate for
the relative error $\epsilon$ in $\kappa$, up to the constant factor $C$.
\begin{table}[!ht]
\begin{tabular}{| c | c | c | c | c | c | c | c |}
\hline
method     & $\max{\operatorname{Im}{\tilde{\kappa}}}$ & mean ${\operatorname{Im}{\tilde{\kappa}}}$ & $\max{\sigma_{\min}}$ & mean $\sigma_{\min}$ &  $\max{\epsilon / C}$ &mean $\epsilon/ C$ & Time (s)\\ \hline
 Boyd's  & 6.7e-13        & 1.7e-14            & 4.9e-13       & 1.6e-14         & 2.1e-13 & 9.0e-15 & 98 \\ \hline
 SVD & -                     & -                         & 3.5e-6       & 5.0e-8       & 1.4e-11& 1.7e-13  & 368 \\ \hline
\end{tabular}
\caption{Performance data for the crescent domain in Fig.~\ref{fig:annularSector}(b)}
\end{table}
\vspace{-0.2in}
\begin{remark}
Boyd's rooting search method is sufficient to find the first 100 eigenfrequencies to at least 12 digits accuracy for those two examples, 
i.e., adjacent roots were never closer than $10^{-3}$ so the SVD was never needed
to replace Boyd's method.
\end{remark}
Finally, we show some eigenmodes of the crescent domain in Fig.~\ref{fig:annularSector}(b),
computed as follows.
Once we obtain an eigenfrequency $\kappa_{j}$, we can extract the normal
derivative data
from the left kernel of the Nystr\"om matrix $I-M_{N}(\kappa_{j})$
then use Green's representation
formula \eqref{GRF} to reconstruct the eigenmode.
Fig.~\ref{f:modes} shows the first 100 such modes;
they are close to the separation-of-variable forms
which would result for an annular sector.
\begin{figure}[!ht]
\includegraphics[width=0.9\textwidth]{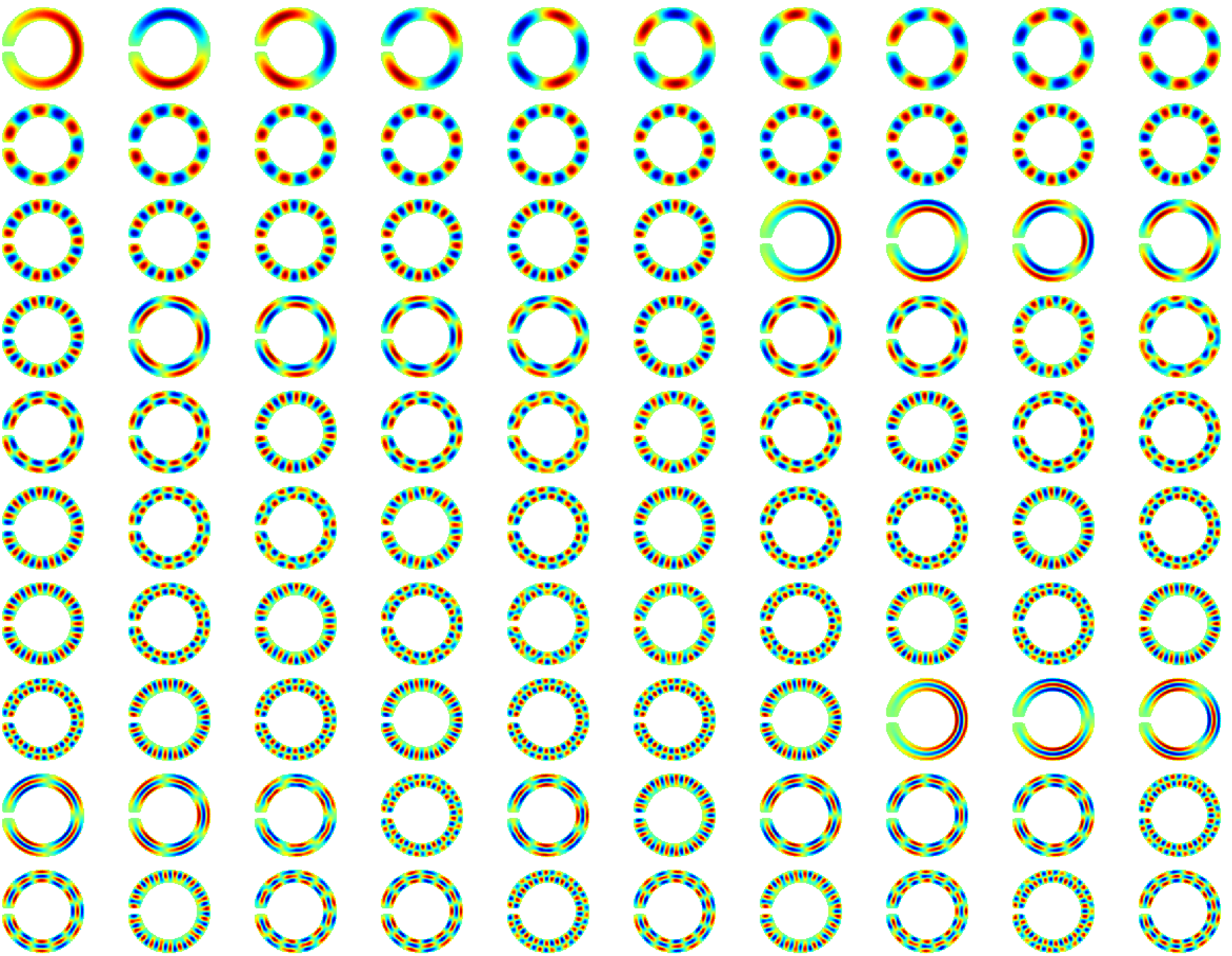}
\caption{%
Modes $u_1$ to $u_{100}$ of the crescent domain, computed via the CFIE
method of this paper, as discussed in Sec.~\ref{s:ps}.
\label{f:modes}
}%
\end{figure}

\section{Conclusions}
\label{s:c}
We have developed a robust method to compute Dirichlet eigenvalues for 2D domains with high accuracy and high efficiency compared to the traditional SVD root-finding method.
We applied Boyd's root-finding method, exploiting the analyticity with respect to frequency of the Fredholm determinant of the boundary integral operator.
This is approximated by the determinant of a Nystr\"om matrix
derived using as spectrally-accurate product quadrature.
Since the determinant is cheap to evaluate, and Boyd's
method requires only around 5 evaluations per eigenvalue found,
we show that the method is 2-10 times faster than existing
SVD-based methods.

In the case of an analytic boundary, we proved that our determinant has
exponential convergence to zero at the true eigenvalues,
and show that this rapid convergence carries over to the computed eigenvalues.
Hence we are able to achieve 13 digits of relative accuracy for
all eigenvalues computed for a star-shaped domain and 12 digits for a highly concave domain, with small numbers of boundary nodes.
For multiply-connected domains or those with exterior resonances,
we introduce a combined-field representation, prove that it
is robust, and show that it eliminates spurious solutions that are
present in the standard approach.
In the case of close eigenfrequencies, we revert to the SVD-based method;
this is not a common occurrence.

We expected that corners, and thus very general domains, can be handled
with a corner-refined quadrature scheme.
One challenge remaining is to analyze a regularization of the CFIE
(case $\eta>0$) in which the Fredholm determinant is not infinite;
the $S$ operator we currently use in the CFIE is not in trace class.
For this we suggest considering $\mathcal{D}+i\eta \mathcal{S}^{2}$.


\bibliographystyle{abbrv}
\bibliography{alex}


\end{document}